\documentclass[a4paper,11pt,twoside]{article}																																																																																																												
\usepackage{amsmath,amssymb,amsfonts,amsthm}
\usepackage{layout,textcomp}
\pagestyle{myheadings}
\markboth{IVALDO NUNES}{AREA-MINIMIZING HYPERBOLIC SURFACES IN 3-MANIFOLDS }
\newtheorem{Theorem}{Theorem}
\newtheorem{Prop}{Proposition}
\newtheorem{Cor}{Corollary}
\newtheorem*{Min}{Min-Oo' s Conjecture}
\newtheorem*{Claim}{Claim}

\theoremstyle{remark}

\newtheorem*{Ak}{Acknowledgements}
\DeclareMathOperator{\Ric}{Ric}
\DeclareMathOperator{\Vol}{Vol}
\title{RIGIDITY OF AREA-MINIMIZING HYPERBOLIC SURFACES IN THREE-MANIFOLDS}
\author{IVALDO NUNES}
\date{}

\begin{document}
\maketitle
\begin{abstract}
We prove that if $M$ is a three-manifold with scalar curvature greater than or equal to -2 and $\Sigma\subset M$ is a two-sided compact embedded Riemann surface of genus greater than 1 which is locally area-minimizing, then the area of $\Sigma$ is greater than or equal to $4\pi(g(\Sigma)-1)$, where $g(\Sigma)$ denotes the genus of $\Sigma$. In the equality case, we prove that the induced metric on $\Sigma$ has constant Gauss curvature equal to -1 and locally $M$ splits along $\Sigma$. As a corollary, we obtain a rigidity result for cylinders $(I\times\Sigma,dt^2+g_{\Sigma})$, where $I=[a,b]\subset\mathbb{R}$ and $g_{\Sigma}$ is a Riemannian metric on $\Sigma$ with constant Gauss curvature equal to -1.
\end{abstract}

\section{Introduction}

It is an interesting fact in differential geometry that if $M$ is a three-manifold with lower bounded scalar curvature then the existence of an 
area-minimizing surface can influence the geometry of $M$.

For instance, it was shown by R. Schoen and S. T. Yau \cite{SY} that if $M$ is a compact orientable three-manifold with nonnegative scalar curvature and $\Sigma\subset M$ is an incompressible two-torus (i.e., the fundamental group of $\Sigma$ injects into that of $M$), then $M$ is flat. To prove that result, they first show that any such manifold contains a stable minimal two-torus. Next, they observe, using the second variation formula of area, that if $M$ has positive scalar curvature, then every compact stable minimal surface in $M$ is a two-sphere. The result follows because if $M$ admits a non-flat metric of nonnegative scalar curvature, then $M$ also admits a metric of positive scalar curvature (see \cite{KW}). 

In \cite{FCS}, D. Fischer-Colbrie and R. Schoen conjectured that in the Schoen and Yau's result above it is sufficient that $M$ contains an area-minimizing two-torus (not necessarily incompressible). This conjecture was proved in \cite{CG}, by M. Cai and G. Galloway. They proved that if $M$ has nonnegative scalar curvature and $\Sigma\subset M$ is a two-sided embedded two-torus which is area-minimizing in its isotopy class, then $M$ is flat. This result is obtained as a corollary of the following local statement.

\begin{Theorem}[M. Cai, G. Galloway]\label{Thm0}
Let $(M,g)$ be a three-manifold with nonnegative scalar curvature. If $\Sigma$ is a two-sided embedded two-torus in $M$ which is locally area-minimizing, then $M$ is flat in a neighborhood of $\Sigma$. 
\end{Theorem}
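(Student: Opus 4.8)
The plan is to first extract pointwise \emph{infinitesimal rigidity} along $\Sigma$ from the stability inequality together with the Gauss equation and Gauss--Bonnet, and then to propagate this rigidity to an entire neighborhood of $\Sigma$ by constructing a local foliation by constant mean curvature tori and playing the monotonicity of their mean curvature against the area-minimizing hypothesis. Throughout I use that a locally area-minimizing surface is in particular a two-sided stable minimal surface.

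\textbf{Step 1 (infinitesimal rigidity).} Let $N$ be a global unit normal and $A$ the second fundamental form; minimality gives $H=0$. Inserting $\phi\equiv1$ in the second variation of area yields $\int_\Sigma(\Ric(N,N)+|A|^2)\,d\mu\le0$. The Gauss equation in dimension three with $H=0$ reads $\Ric(N,N)+|A|^2=\tfrac12 R_M+\tfrac12|A|^2-K_\Sigma$, and since $\Sigma$ is a torus, $\int_\Sigma K_\Sigma\,d\mu=0$ by Gauss--Bonnet; hence $\int_\Sigma(\tfrac12 R_M+\tfrac12|A|^2)\,d\mu\le0$, which together with $R_M\ge0$ forces $R_M\equiv0$ and $A\equiv0$ on $\Sigma$. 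Then $\phi\equiv1$ makes the (nonnegative) second variation quadratic form vanish, so it lies in the kernel of the stability operator, i.e. $\Delta_\Sigma 1+(\Ric(N,N)+|A|^2)=0$, which with $A\equiv0$ gives $\Ric(N,N)\equiv0$; the Gauss equation then yields $K_\Sigma\equiv0$. Thus $\Sigma$ is flat and totally geodesic, $M$ is scalar-flat and $\Ric(N,N)=0$ along $\Sigma$, and the Jacobi operator of $\Sigma$ is just $\Delta_\Sigma$, whose kernel is the constants.

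\textbf{Step 2 (CMC foliation and $H\equiv0$).} In a tubular neighborhood identified with $(-\delta,\delta)\times\Sigma$ via the normal exponential map, a standard implicit-function-theorem argument — using that $\Delta_\Sigma$ is an isomorphism from functions of zero mean value to themselves, with the remaining constant direction absorbed by one extra real parameter — produces, for $t$ in a small interval, embedded tori $\Sigma_t$ close to $\Sigma_0=\Sigma$, each of constant mean curvature $H(t)$, foliating a neighborhood $U$ of $\Sigma$, with positive lapse function $\rho_t$ and $\rho_0\equiv1$. Since $H(t)$ is constant on $\Sigma_t$, dividing the evolution identity $H'(t)=-\Delta_{\Sigma_t}\rho_t-(\Ric(N_t,N_t)+|A_t|^2)\rho_t$ by $\rho_t$ and integrating over $\Sigma_t$, then using an integration by parts, the Gauss equation, Gauss--Bonnet on the torus $\Sigma_t$ and $R_M\ge0$, gives
\[
H'(t)\int_{\Sigma_t}\frac{d\mu_t}{\rho_t}=-\int_{\Sigma_t}\frac{|\nabla\rho_t|^2}{\rho_t^2}\,d\mu_t-\int_{\Sigma_t}\Bigl(\tfrac12 R_M+\tfrac12 H(t)^2+\tfrac12|A_t|^2\Bigr)\,d\mu_t\le0,
\]
so $H'\le0$. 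On the other hand the first variation of area along the foliation is $\tfrac{d}{dt}|\Sigma_t|=H(t)\int_{\Sigma_t}\rho_t\,d\mu_t$, and since $\Sigma_0$ is locally area-minimizing, $t=0$ is a minimum of $t\mapsto|\Sigma_t|$; as $\rho_t>0$ this forces $H(t)\le0$ for $t<0$ and $H(t)\ge0$ for $t>0$. Combined with $H(0)=0$ and $H'\le0$ this yields $H\equiv0$: $U$ is foliated by minimal tori, each of which carries the positive Jacobi field $\rho_t$ and is therefore stable.

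\textbf{Step 3 (conclusion) and the main obstacle.} Applying Step 1 to each leaf $\Sigma_t$ — a closed two-sided stable minimal torus — gives $R_M\equiv0$ on $U$, each $\Sigma_t$ flat and totally geodesic, and Jacobi operator $\Delta_{\Sigma_t}$; then $0=H'(t)=-\Delta_{\Sigma_t}\rho_t$ forces $\rho_t$ constant on each leaf. Removing the tangential drift of the deformation field by a $t$-dependent diffeomorphism of $\Sigma$ and rescaling $t$ so that the lapse is identically $1$, the metric on $U$ becomes $dt^2+g_t$ with $\partial_t g_t\equiv0$ (the leaves being totally geodesic), so $g_t\equiv g_\Sigma$ and $(U,g)$ is the Riemannian product $(-\varepsilon,\varepsilon)\times(\Sigma,g_\Sigma)$; its scalar curvature being $0$ means $g_\Sigma$ is flat, hence $M$ is flat on $U$. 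I expect the most delicate point to be Step 2: the construction and regularity of the constant mean curvature foliation, and in particular verifying that $\rho_t>0$ on all of $U$ rather than just at $t=0$, together with the reparametrization in Step 3 that removes the tangential component of the deformation field; the curvature bookkeeping — repeated use of the Gauss equation, Gauss--Bonnet and $R_M\ge0$ — is routine once the foliation is available.
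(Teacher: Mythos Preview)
The paper does not itself prove Theorem~\ref{Thm0}: it is quoted as the result of Cai and Galloway~\cite{CG}, and the paper only remarks that their proof ``uses an argument based on a local deformation around $\Sigma$ to obtain a metric with positive scalar curvature.'' So there is no detailed proof here to compare against line by line.

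Your argument is correct and follows a genuinely different route from the one the paper attributes to Cai--Galloway: instead of conformally deforming the ambient metric, you run the CMC-foliation strategy of Bray--Brendle--Neves~\cite{BBN} adapted to the torus. What makes this work in genus~$1$ is that the Gauss--Bonnet term in your Step~2 integral vanishes, so the monotonicity $H'(t)\int_{\Sigma_t}\rho_t^{-1}\,d\mu_t\le 0$ follows directly from $R_M\ge 0$. By contrast, for genus $g\ge 2$ the Euler characteristic has the wrong sign and this direct monotonicity fails, which is precisely why the paper's proof of its own Theorem~\ref{Thm1} replaces this step by the Yamabe/maximum-principle argument of Proposition~\ref{prop3}. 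So your approach is the natural ``easy'' endpoint of the method, while the paper's contribution is exactly the workaround needed once your Step~2 inequality is no longer free.

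One small logical wrinkle in Step~2: local area-minimality at $t=0$ does not by itself force $H(t)\le 0$ for $t<0$ and $H(t)\ge 0$ for $t>0$ (the derivative of a function with a local minimum need not have a sign). The clean order is: $H'\le 0$ together with $H(0)=0$ gives $H\ge 0$ on $(-\epsilon,0]$ and $H\le 0$ on $[0,\epsilon)$; hence $\tfrac{d}{dt}|\Sigma_t|\le 0$ for $t\ge 0$ and $\ge 0$ for $t\le 0$, so $|\Sigma_t|\le|\Sigma_0|$ on both sides; the area-minimizing hypothesis then forces $|\Sigma_t|\equiv|\Sigma_0|$ and thus $H\equiv 0$. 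With this reordering your proof goes through.
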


It follows that the induced metric on $\Sigma$ is flat and that locally $M$ splits along $\Sigma$. The proof of Theorem \ref{Thm0} uses an argument based on a local deformation around $\Sigma$ to obtain a metric with positive scalar curvature.

Recently, H. Bray, S. Brendle and A. Neves studied in \cite{BBN} the case where $M$ has scalar curvature greater than or equal to $2$ and $\Sigma\subset M$ is a locally area-minimizing embedded two-sphere. In their case, the model is the Riemannian manifold $(\mathbb{R}\times S^2,dt^2+g)$, where $g$ is the standard metric on $S^2$ with constant Gauss curvature equal to 1. They proved the following result.

\begin{Theorem}[H.Bray, S. Brendle, A. Neves]\label{Thm0a}
Let $(M,g)$ be a three-manifold with scalar curvature $R_g\geqslant 2$. If $\Sigma$ is an embedded two-sphere which is locally area-minimizing, then $\Sigma$ has area less than or equal to $4\pi$. Moreover, if equality holds, then $\Sigma$ with the induced metric has constant Gauss curvature equal to 1 and locally $M$ splits along $\Sigma$.  
\end{Theorem}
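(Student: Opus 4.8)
\noindent\emph{Proof proposal.} Since $\Sigma$ is locally area-minimizing it is a stable minimal surface, and the plan is to feed the constant function into the stability inequality to obtain the area bound, and then to analyze the equality case by foliating a neighborhood of $\Sigma$ by constant mean curvature surfaces.

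For the area bound, plugging $f\equiv 1$ into the second variation inequality gives $\int_\Sigma(|A|^2+\Ric(\nu,\nu))\,d\sigma\le 0$, where $A$ and $\nu$ denote the second fundamental form and a unit normal of $\Sigma$. Since $\Sigma$ is minimal, the Gauss equation yields the pointwise identity $|A|^2+\Ric(\nu,\nu)=\tfrac12|A|^2+\tfrac12R_g-K_\Sigma$; hence, using $R_g\ge 2$ and Gauss--Bonnet for the two-sphere $\Sigma$,
\[
0\ \ge\ \int_\Sigma\bigl(|A|^2+\Ric(\nu,\nu)\bigr)d\sigma\ \ge\ \int_\Sigma(1-K_\Sigma)\,d\sigma\ =\ |\Sigma|-4\pi ,
\]
which gives $|\Sigma|\le 4\pi$.

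Assume now $|\Sigma|=4\pi$. Then all of the above are equalities, so $\Sigma$ is totally geodesic, $R_g\equiv 2$ and $K_\Sigma\equiv 1$ along $\Sigma$, $\Ric(\nu,\nu)\equiv 0$, and the Jacobi operator is $L=-\Delta_\Sigma$, with lowest eigenvalue $0$ and one-dimensional kernel spanned by the constants. Because $\ker L$ is one-dimensional, the implicit function theorem --- applied to the mean curvature map corrected by its average over $\Sigma$ --- produces a smooth foliation $\{\Sigma_t\}_{|t|<\varepsilon}$ of a neighborhood $U$ of $\Sigma$ by surfaces of constant mean curvature $H(t)$, with $\Sigma_0=\Sigma$ and positive lapse $\rho_t$; write $A(t)=|\Sigma_t|$. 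The first variation of area reads $A'(t)=H(t)\int_{\Sigma_t}\rho_t\,d\sigma_t$, while the linearization of the mean curvature along the foliation gives $H'(t)=-\Delta_{\Sigma_t}\rho_t-(|A_t|^2+\Ric(\nu_t,\nu_t))\rho_t$ (a function, but spatially constant). Dividing this by $\rho_t$, integrating over $\Sigma_t$, discarding the nonnegative term $\int_{\Sigma_t}|\nabla\rho_t|^2\rho_t^{-2}$, and using the Gauss equation, $R_g\ge 2$ and Gauss--Bonnet for the sphere $\Sigma_t$, I expect
\[
0\ \le\ A(t)-4\pi\ \le\ -\,H'(t)\int_{\Sigma_t}\rho_t^{-1}\,d\sigma_t ,
\]
hence $H'(t)\le 0$. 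As $H(0)=0$, this forces $H(t)\le 0$ for $t\ge 0$ and $H(t)\ge 0$ for $t\le 0$, so $A$ is nonincreasing on $[0,\varepsilon)$ and nondecreasing on $(-\varepsilon,0]$, whence $A(t)\le A(0)=4\pi$. But $\Sigma$ is area-minimizing, so $A(t)\ge 4\pi$; therefore $A(t)\equiv 4\pi$ and $H(t)\equiv 0$.

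It remains to upgrade this to a splitting. Each leaf $\Sigma_t$ is now minimal and carries the positive Jacobi field $\rho_t$, hence is stable; applying the equality case of the first part to $\Sigma_t$ shows every leaf is totally geodesic with $K_{\Sigma_t}\equiv 1$, that $R_g\equiv 2$ on $U$, and that $\rho_t$ is harmonic --- hence constant --- on each leaf. Reparametrizing $t$ so that the lapse is $1$, the leaves become the level sets of the distance function to $\Sigma$, and since they are totally geodesic the induced metrics do not vary, so $U$ is isometric to $\bigl((-\varepsilon,\varepsilon)\times\Sigma,\,dt^2+g_\Sigma\bigr)$ with $g_\Sigma$ of constant Gauss curvature $1$. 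I expect the main difficulty to lie precisely in this last stretch: constructing the constant mean curvature foliation rigorously, keeping the signs in the first and second variation formulas consistent with the chosen orientation, and --- the key point --- exploiting that $\Sigma$ is \emph{area-minimizing} and not merely stable in order to pin down $A(t)\equiv 4\pi$.
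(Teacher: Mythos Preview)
The paper does not itself prove this theorem: it is quoted from Bray--Brendle--Neves \cite{BBN} as motivation, and the only remark the paper makes about its proof is that it ``is based on a construction of a one-parameter family of constant mean curvature two-spheres.'' Your outline is precisely that argument and is correct. Stability with the constant test function plus Gauss--Bonnet gives $|\Sigma|\le 4\pi$; in the equality case the Jacobi operator reduces to $\Delta_\Sigma$, the implicit function theorem yields the CMC foliation, and dividing $H'(t)=-\Delta_{\Sigma_t}\rho_t-(|A_t|^2+\Ric(\nu_t,\nu_t))\rho_t$ by $\rho_t$ and integrating gives, after using $R_g\ge 2$ and Gauss--Bonnet, exactly $H'(t)\int_{\Sigma_t}\rho_t^{-1}\le 4\pi-|\Sigma_t|$. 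Since $\Sigma$ is locally area-minimizing one has $|\Sigma_t|\ge 4\pi$, hence $H'(t)\le 0$, and the rest of your argument (forcing $H\equiv 0$, $|\Sigma_t|\equiv 4\pi$, constant lapse, and the product splitting) goes through.

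It is worth contrasting this with what the paper actually does for its own main theorem, the genus $g(\Sigma)\ge 2$ analogue with $R_g\ge -2$. There the same computation only yields $H'(t)\int_{\Sigma_t}\rho_t^{-1}\le |\Sigma_t|-4\pi(g(\Sigma)-1)$, and the area-minimizing hypothesis makes the right-hand side \emph{nonnegative}, so one cannot conclude $H'(t)\le 0$ from the differential inequality. The paper therefore replaces your monotonicity step by a conformal argument: assuming $H(\epsilon_k)>0$ along a sequence $\epsilon_k\downarrow 0$, it solves the Yamabe problem with minimal boundary on the slab bounded by $\Sigma$ and $\Sigma_{\epsilon_k}$, uses the Hopf boundary lemma to force the conformal factor below $1$, and obtains an area-minimizer of area strictly less than $4\pi(g(\Sigma)-1)$, contradicting the inequality part. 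So your approach is the right one for the two-sphere statement you were asked about, but it is exactly the step that fails---and that the paper has to work around---when the sign of the scalar-curvature bound is reversed.
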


The proof in \cite{BBN} is based on a construction of a one-parameter family of constant mean curvature two-spheres. A global result was also obtained using the local one above. More precisely, it was proved that if $\Sigma$ is area-minimizing in its homotopy class and has area equal to $4\pi$, then the universal cover of $M$ is isometric to $(\mathbb{R}\times S^2,dt^2+g)$. A similar rigidity result for area-minimizing projective planes was obtained in \cite{BBEN}.


A natural question is to know what happens when the model is the Riemannian product manifold $(\mathbb{R}\times \Sigma, dt^2+g_{\Sigma})$, where $\Sigma$ is a Riemann surface of genus greater than 1 and $g_{\Sigma}$ is a Riemannian metric on $\Sigma$ with constant Gauss curvature equal to -1. 

In the present paper, we prove that the analogous result is true in this case. The first theorem of this paper is stated below.

\begin{Theorem}\label{Thm1}
Let $(M^3,g)$ be a Riemannian manifold with $R_g\geqslant -2$, where $R_g$ denotes the scalar curvature of $M$. If $\Sigma^2\subset M$ is a two-sided compact embedded Riemann surface of genus $g(\Sigma)\geqslant 2$ which is locally area-minimizing, then
\begin{equation}
|\Sigma|_g\geqslant 4\pi(g(\Sigma)-1),\label{inequality}
\end{equation}
where $|\Sigma|_g$ is the area of $\Sigma$ with respect to the induced metric. Moreover, if the equality holds, then $\Sigma$ has a neighborhood which is isometric to \linebreak $((-\epsilon,\epsilon)\times\Sigma,dt^2+g_{\Sigma})$, where $\epsilon>0$ and $g_{\Sigma}$ is the induced metric on $\Sigma$ which has constant Gauss curvature equal to -1. More precisely, the isometry is given by $f(t,x)=\exp_x(t\nu(x)), (t,x)\in(-\epsilon,\epsilon)\times\Sigma$, where $\nu$ is the unit normal vector field to $\Sigma$.
\end{Theorem}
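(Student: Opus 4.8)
The plan is to follow the strategy of Bray--Brendle--Neves used for Theorem \ref{Thm0a}: first extract the area bound \eqref{inequality} from the stability inequality, and then, in the equality case, construct a constant mean curvature foliation near $\Sigma$ and show it is forced to be a minimal foliation by totally geodesic leaves, which produces the splitting.

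\textbf{The area bound and the equality case.} Since $\Sigma$ is locally area-minimizing it is a stable minimal surface, so the stability inequality $\int_\Sigma(|\nabla\varphi|^2-(|A|^2+\Ric(\nu,\nu))\varphi^2)\,da\ge0$ holds; taking $\varphi\equiv1$ gives $\int_\Sigma(|A|^2+\Ric(\nu,\nu))\,da\le0$, where $A$ is the second fundamental form of $\Sigma$. For the minimal surface $\Sigma$ the Gauss equation reads $|A|^2+\Ric(\nu,\nu)=\tfrac12|A|^2+\tfrac12R_g-K_\Sigma$, so integrating, using Gauss--Bonnet $\int_\Sigma K_\Sigma\,da=2\pi\chi(\Sigma)=-4\pi(g(\Sigma)-1)$ and $R_g\ge-2$, one gets $|\Sigma|_g\ge 4\pi(g(\Sigma)-1)+\tfrac12\int_\Sigma|A|^2\,da\ge4\pi(g(\Sigma)-1)$, which is \eqref{inequality}. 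If equality holds, then $A\equiv0$ and $R_g\equiv-2$ along $\Sigma$, and the constant function realizes the first eigenvalue, equal to $0$, of the Jacobi operator $L=\Delta_\Sigma+|A|^2+\Ric(\nu,\nu)$; hence $L(1)=0$, which forces $\Ric(\nu,\nu)\equiv0$ along $\Sigma$ and, by the Gauss equation again, $K_\Sigma\equiv-1$. In particular $L=\Delta_\Sigma$, whose kernel is the constants.

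\textbf{The CMC foliation.} Because $L=\Delta_\Sigma$ on $\Sigma$, the implicit function theorem applied on the spaces of functions with vanishing mean value produces, for $t$ in a small interval $(-\epsilon,\epsilon)$, surfaces $\Sigma_t=\{\exp_x(w(t,x)\nu(x)):x\in\Sigma\}$ with $\Sigma_0=\Sigma$, $w(0,\cdot)=0$, $\partial_tw(0,\cdot)\equiv1$, such that each $\Sigma_t$ has constant mean curvature $h(t)$ with $h(0)=0$; shrinking $\epsilon$, the $\Sigma_t$ foliate a neighborhood of $\Sigma$. Let $\rho_t>0$ be the lapse function (the normal component of $\partial_t\exp_x(w(t,x)\nu(x))$), so that $\rho_0\equiv1$, and put $A(t)=|\Sigma_t|_g$, $\delta(t)=A(t)-4\pi(g(\Sigma)-1)$. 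Since $\Sigma$ is locally area-minimizing and the $\Sigma_t$ are nearby surfaces isotopic to $\Sigma$, one has $\delta(t)\ge0$ with $\delta(0)=0$.

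\textbf{The key differential inequality and the ODE step.} The first variation of area gives $\delta'(t)=A'(t)=h(t)\int_{\Sigma_t}\rho_t\,da_t$. Differentiating the mean curvature and using that $h(t)$ is constant on $\Sigma_t$ yields $\Delta_{\Sigma_t}\rho_t+(|A_t|^2+\Ric(\nu_t,\nu_t))\rho_t=-h'(t)$; dividing by $\rho_t$, integrating over $\Sigma_t$, and inserting the Gauss equation and Gauss--Bonnet on $\Sigma_t$ (note $\chi(\Sigma_t)=\chi(\Sigma)$) together with $R_g\ge-2$, one obtains
\begin{equation}
h'(t)\int_{\Sigma_t}\frac{da_t}{\rho_t}\ =\ \delta(t)-\int_{\Sigma_t}\Big(\frac{|\nabla\rho_t|^2}{\rho_t^{2}}+\frac{|A_t|^2}{2}+\frac{R_g+2}{2}\Big)da_t-\frac{h(t)^2A(t)}{2}\ \le\ \delta(t).\nonumber
\end{equation}
Since $\rho_0\equiv1$, the quantities $\int_{\Sigma_t}\rho_t$ and $\int_{\Sigma_t}\rho_t^{-1}$ are bounded above and below by positive constants for $t$ small. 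Combining $h(0)=0$, $\delta(0)=0$, $\delta(t)=\int_0^th(s)\big(\int_{\Sigma_s}\rho_s\big)ds$ and $h'(t)\le C\,\delta(t)$, a double integration gives $h(t)\le C\,t\int_0^th_+(s)\,ds$ on $[0,\epsilon)$, hence $\int_0^{\epsilon}h_+\,ds\le C\epsilon^2\int_0^{\epsilon}h_+\,ds$; shrinking $\epsilon$ forces $h\le0$ on $[0,\epsilon)$, so $\delta'=h\int\rho\le0$, and with $\delta\ge0$, $\delta(0)=0$ this gives $\delta\equiv0$, then $h\equiv0$, on $[0,\epsilon)$; the symmetric argument handles $(-\epsilon,0]$. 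This ODE step is the main obstacle and the place where the hyperbolic case is genuinely harder than the spherical Theorem \ref{Thm0a}: there the sign of $\chi$ turns the analogous inequality into $h'(t)\int_{\Sigma_t}\rho_t^{-1}\le-\delta(t)\le0$, giving monotonicity of $A(t)$ at once, whereas here one must use that $\delta$ is itself controlled by $h$.

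\textbf{The splitting.} With $h\equiv0$ and $\delta\equiv0$, the displayed identity forces, for every $t$, $\nabla\rho_t\equiv0$ (so $\rho_t$ is constant on $\Sigma_t$), $A_t\equiv0$ (so $\Sigma_t$ is totally geodesic) and $R_g\equiv-2$ along $\Sigma_t$; then $\Ric(\nu_t,\nu_t)\equiv0$ from $\Delta_{\Sigma_t}\rho_t+(|A_t|^2+\Ric(\nu_t,\nu_t))\rho_t=0$, and $K_{\Sigma_t}\equiv-1$ by the Gauss equation. Finally, reparametrizing the foliation by flowing along its unit normal field $N$ (each $N$-integral curve meets every nearby leaf exactly once), one gets coordinates in which $g=\tau(t,x)^2\,dt^2+g_t$, $g_t$ being the induced metric on $\Sigma_t$; total geodesy of the leaves gives $\partial_tg_t=0$, hence $g_t\equiv g_\Sigma$, and since each $\rho_t$ is constant along $\Sigma_t$ so is $\tau$, so a reparametrization of $t$ yields $g=dt^2+g_\Sigma$ with $g_\Sigma$ of constant Gauss curvature $-1$. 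The $t$-curves are then unit-speed geodesics normal to $\Sigma$, so the isometry is $(t,x)\mapsto\exp_x(t\nu(x))$, as claimed.
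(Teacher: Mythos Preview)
Your proof is correct, and your treatment of the area inequality, the infinitesimal rigidity of $\Sigma$, the construction of the CMC foliation, and the final splitting all agree with the paper. The genuine divergence is in how you establish $h(t)\le 0$ on $[0,\epsilon)$.

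The paper proves this by contradiction via a conformal deformation: assuming $H(\epsilon_k)>0$ along a sequence $\epsilon_k\to 0$, it pulls the metric back to the slab $V_k=[0,\epsilon_k]\times\Sigma$, shows the Yamabe invariant $Q(V_k)$ is negative, and invokes Escobar's solution of the Yamabe problem with minimal boundary to obtain $\bar g=u^4g_k$ with $R_{\bar g}\equiv -2$ and $H_{\bar g}\equiv 0$. The elliptic equation for $u$ together with the strong maximum principle and the Hopf boundary lemma forces $u<1$, hence $|\Sigma|_{\bar g}<4\pi(g(\Sigma)-1)$; an embedded area-minimizer in the isotopy class of $\Sigma$ (produced by Hass--Scott/Meeks--Simon--Yau) then violates the area bound for $(\bar g)$, giving the contradiction.

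You bypass all of this machinery by reading off from the Jacobi equation the exact identity
\[
h'(t)\int_{\Sigma_t}\rho_t^{-1}\,da_t=\delta(t)-\int_{\Sigma_t}\Big(\tfrac{|\nabla\rho_t|^2}{\rho_t^{2}}+\tfrac{|A_t|^2}{2}+\tfrac{R_g+2}{2}\Big)da_t-\tfrac{h(t)^2}{2}A(t),
\]
so that $h'\le C\delta$, and then closing a Gronwall loop with $\delta'=h\int\rho_t$ and $\delta\ge 0$ to force $h_+\equiv 0$. This is more elementary and entirely self-contained once the CMC foliation exists; it is essentially the simplification later given by Micallef and Moraru. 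The paper's route, while heavier, is of independent interest because it exhibits the role of the conformal Laplacian and gives a template reused in related boundary-rigidity problems. Both arguments land in the same place: once $h\equiv 0$ and $\delta\equiv 0$, your displayed identity forces $A_t\equiv 0$, $\nabla\rho_t\equiv 0$, $R_g\equiv -2$ along every leaf, and the product splitting follows.
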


We note that a related rigidity result for constant mean curvature surfaces of genus 1 was obtained in \cite{ACG}. We also refer the reader to the excellent surveys \cite{G} and \cite{SB} on rigidity problems associated to scalar curvature. 

Let us give an idea of the proof of Theorem \ref{Thm1}. The inequality (\ref{inequality}) follows from the second variation of area using the Gauss equation, the lower bound of the scalar curvature and the Gauss-Bonnet theorem. In the equality case, we construct, using the implicit function theorem, a one-parameter family of constant mean curvature surfaces, denoted by $\Sigma_t$, with $\Sigma_0=\Sigma$ and all having the same genus. The next argument in the proof is the fundamental one. Arguing by contradiction and using the solution of the Yamabe problem for compact manifolds with boundary and the Hopf's maximum principle, we are able to conclude that each $\Sigma_t$ has the same area. Finally, we obtain from this that $\Sigma$ has a neighborhood isometric to $((-\epsilon,\epsilon)\times\Sigma, dt^2+g_{\Sigma}).$ 

If we suppose that $\Sigma$ minimizes area in its homotopy class, then we obtain global rigidity using a standard continuation argument contained in \cite{BBN,CG}. 

\begin{Cor}\label{Cor1}
Let $(M^3,g)$ be a complete Riemannian three-manifold with \linebreak$R_g\geqslant -2$. Moreover, suposse that $\Sigma\subset M$ is a two-sided compact embedded Riemann surface of genus $g(\Sigma)\geqslant 2$ which minimizes area in its homotopy class. Then $|\Sigma|_g$ satisfies the inequality (\ref{inequality}) and if equality holds, then \linebreak $(\mathbb{R}\times\Sigma,dt^2+g_{\Sigma})$ is an isometric covering of $(M^3,g)$, where $g_{\Sigma}$ is the induced metric on $\Sigma$ which has constant Gauss curvature equal to -1. The covering is given by $f(t,x)=\exp_x(t\nu(x)), (t,x)\in\mathbb{R}\times\Sigma$, where $\nu$ is the unit normal vector to $\Sigma$.
\end{Cor}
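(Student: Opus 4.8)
The strategy is to deduce the corollary from Theorem~\ref{Thm1} together with a continuation argument of the kind used in \cite{BBN,CG}. Since a surface minimizing area in its homotopy class is, in particular, locally area-minimizing, the inequality (\ref{inequality}) is immediate from Theorem~\ref{Thm1}. Assume now that $|\Sigma|_g=4\pi(g(\Sigma)-1)$. Theorem~\ref{Thm1} then provides $\epsilon_0>0$ such that $f(t,x)=\exp_x(t\nu(x))$ is an isometry from $((-\epsilon_0,\epsilon_0)\times\Sigma,dt^2+g_\Sigma)$ onto a neighborhood of $\Sigma$ in $M$, where $g_\Sigma$ has constant Gauss curvature $-1$; by the Gauss--Bonnet theorem every slice $\{t\}\times\Sigma$ of the product then has area exactly $4\pi(g(\Sigma)-1)=|\Sigma|_g$.

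Next I would consider the set $I$ of all $t\in\mathbb{R}$ such that $f$ restricted to $\overline{J_t}\times\Sigma$ is a local isometry for the product metric $dt^2+g_\Sigma$, where $J_t$ denotes the open interval with endpoints $0$ and $t$. Completeness of $M$ ensures that $f$ is defined on all of $\mathbb{R}\times\Sigma$, so only the local-isometry condition is at stake. One checks directly that $I$ is an interval and that $(-\epsilon_0,\epsilon_0)\subset I$; the goal is to show $I=\mathbb{R}$, which I would do by proving $I$ is open and closed. Closedness is the routine part: if $t_n\to t_0$ with $t_n\in I$, then $f$ is a local isometry on $J_{t_0}\times\Sigma$, so the pulled-back metric there equals $dt^2+g_\Sigma$, and by smooth dependence of the normal geodesics on their initial data (and completeness of $M$) the same identity of metrics persists on $\overline{J_{t_0}}\times\Sigma$, giving $t_0\in I$.

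For openness, I would fix $t_0\in I$ with, say, $t_0\geq 0$. On $f([0,t_0]\times\Sigma)$ the metric is the product $dt^2+g_\Sigma$, so $\Sigma_{t_0}:=f(\{t_0\}\times\Sigma)$ is a two-sided compact surface isometric to $(\Sigma,g_\Sigma)$; it is homotopic to $\Sigma$ through the family $\Sigma_s=f(\{s\}\times\Sigma)$, $0\leq s\leq t_0$, and since it has the same area $4\pi(g(\Sigma)-1)$ as $\Sigma$, which minimizes area in its homotopy class, $\Sigma_{t_0}$ also minimizes area in its homotopy class, hence is locally area-minimizing. Applying Theorem~\ref{Thm1} to $\Sigma_{t_0}$ yields $\epsilon'>0$ and a product neighborhood of $\Sigma_{t_0}$ whose normal geodesics coincide, on the overlap, with the curves $s\mapsto\exp_x(s\nu(x))$; splicing this with the product structure on $[0,t_0]\times\Sigma$ shows that $f$ remains a local isometry slightly past $t_0$, so a neighborhood of $t_0$ lies in $I$ (the case $t_0\leq 0$ being symmetric). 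Once $I=\mathbb{R}$, the map $f\colon(\mathbb{R}\times\Sigma,dt^2+g_\Sigma)\to(M,g)$ is a local isometry; since the domain is complete, being a Riemannian product of complete factors, and $M$ is connected, $f$ is a Riemannian covering map, which is exactly the assertion of the corollary.

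I expect the genuine difficulty to lie in the openness step. Applying Theorem~\ref{Thm1} to the parallel slice $\Sigma_{t_0}$ requires knowing that $\Sigma_{t_0}$ is actually an \emph{embedded} surface in $M$, whereas a priori $f$ is only an immersion on $[0,t_0]\times\Sigma$; this must be addressed, for instance by first running the continuation on the largest subinterval of $I$ on which $f$ is injective and invoking the covering-map conclusion, which does not require global embeddedness, only afterwards. One must also verify that the two product structures near $\Sigma_{t_0}$---the one inherited from $\Sigma$ and the one furnished by Theorem~\ref{Thm1}---agree, i.e.\ that the unit normal of $\Sigma_{t_0}$ is the continuation of $\partial_t$; this is where uniqueness of geodesics and of the normal exponential map is used.
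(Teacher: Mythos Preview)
Your proposal is correct and follows essentially the same continuation argument as the paper: one defines the set of $t$ for which $f|_{[0,t]\times\Sigma}$ is a local isometry, shows it is nonempty, open and closed by applying the local rigidity (Theorem~\ref{Thm1}/Proposition~\ref{Prop4}) to each slice $\Sigma_t$, and concludes that $f$ is a covering map since a local isometry from a complete manifold is a covering. The embeddedness concern you flag is precisely the point the paper finesses by treating $\Sigma_t$ as an \emph{immersed} surface with the metric induced by $f$ and invoking the local rigidity in that immersed sense; your observation that the normal to $\Sigma_{t_0}$ coincides with $\partial_t f$ (by uniqueness of geodesics) is exactly what makes the two product structures match.
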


In recent years, several results concerning the problem of recognizing the geometry of a compact manifold with boundary provided the geometry of the boundary is known and some curvature conditions are satisfied were obtained. For example, in \cite{Mi}, P. Miao observed that the positive mass theorem (see \cite{SY2,EW}) implies the following rigidity result for the unit ball $B^n\subset \mathbb{R}^n$.

\begin{Theorem}[P. Miao]
Let $g$ be a smooth Riemannian metric on $B^n$ with nonnegative scalar curvature such that $\partial B^n=S^{n-1}$ with the induced metric has mean curvature greater than or equal to $n-1$ and is isometric to $S^{n-1}$ with the standard metric. Then $g$ is isometric to the standard metric of $B^n$. 
\end{Theorem}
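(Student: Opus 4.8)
\textit{Proof sketch.} The plan is to deduce the theorem from the rigidity case of the positive mass theorem by means of a gluing construction. First I would form the manifold $N=(B^n,g)\cup_{S^{n-1}}(E,g_{\mathrm{euc}})$, where $E=\{x\in\mathbb{R}^n:|x|\geqslant 1\}$, $g_{\mathrm{euc}}$ is the Euclidean metric, and the two pieces are glued along their common boundary by means of the given isometry between $(\partial B^n,g|_{\partial B^n})$ and the unit round sphere $S^{n-1}$. Topologically $N$ is $\mathbb{R}^n$; it carries a metric $h$ that is smooth on each of the two closed pieces and Lipschitz across the gluing hypersurface $\Sigma\cong S^{n-1}$, and since $h$ coincides with $g_{\mathrm{euc}}$ outside a compact set, $N$ is asymptotically flat with vanishing ADM mass $m(N)=0$.

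Next I would verify that $h$ has nonnegative scalar curvature in the weak sense appropriate to such corners. On the interior piece this is the hypothesis $R_g\geqslant 0$, and on $E$ the scalar curvature vanishes. Along $\Sigma$ the distributional scalar curvature of $h$ carries a singular term which is a positive constant multiple of $(H_{\mathrm{in}}-H_{\mathrm{out}})\,d\sigma_\Sigma$, where $H_{\mathrm{in}}$ is the mean curvature of $\partial B^n$ computed in $(B^n,g)$ and $H_{\mathrm{out}}$ is the mean curvature of the unit sphere computed in $(E,g_{\mathrm{euc}})$, both with respect to the unit normal pointing out of $B^n$. Since $H_{\mathrm{out}}=n-1$, the hypothesis $H_{\mathrm{in}}\geqslant n-1$ says precisely that this singular term is a nonnegative measure; thus $h$ is a metric of nonnegative scalar curvature with a corner of the correct sign along $\Sigma$.

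I would then invoke the positive mass theorem in the form valid for metrics with such corners, which is exactly P. Miao's extension of the smooth positive mass theorem referenced above. Because $m(N)=0$, its rigidity statement forces $(N,h)$ to be isometric to $(\mathbb{R}^n,g_{\mathrm{euc}})$; in particular the corner is illusory. Restricting such an isometry $\Phi$ to the exterior piece $E$, a connected open subset of Euclidean space carrying the flat metric, $\Phi|_E$ is the restriction of a rigid motion $A$ of $\mathbb{R}^n$; hence $\Phi(\Sigma)=A(S^{n-1})$ and $\Phi(B^n)=A(\overline{B^n})$, so $A^{-1}\circ(\Phi|_{B^n})$ is an isometry from $(B^n,g)$ onto the standard unit ball. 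This is the desired conclusion.

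The step I expect to be the main obstacle is making sense of, and using, the positive mass theorem for the non-smooth metric $h$: the classical theorem and, more delicately, its rigidity case are established for metrics that are $C^2$ with suitable decay, so one must justify that they persist for metrics that are only Lipschitz across a hypersurface satisfying the mean-curvature inequality. This is the technical core of Miao's argument. It is carried out by mollifying $h$ in a collar of $\Sigma$ to obtain smooth metrics whose scalar curvature is bounded below by a function whose negative part tends to $0$ in $L^1$ and whose ADM masses converge to $m(N)$, correcting each by a small conformal factor to restore nonnegative scalar curvature, applying the smooth positive mass theorem, and passing to the limit; recovering the equality case then requires a further, more careful analysis of this approximation (alternatively, one may route the exterior part of the argument through the Shi--Tam monotonicity for the region $E$).
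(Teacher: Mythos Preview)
The paper does not actually supply a proof of this theorem; it merely states Miao's result and remarks that it follows from the positive mass theorem, citing \cite{Mi} (and \cite{SY2,EW} for the PMT). Your sketch is correct and is precisely the argument the paper is alluding to: glue $(B^n,g)$ to the Euclidean exterior along the round $S^{n-1}$, observe that the resulting asymptotically flat manifold has zero ADM mass and nonnegative (distributional) scalar curvature because the mean-curvature jump $H_{\mathrm{in}}-H_{\mathrm{out}}\geqslant 0$, and invoke the rigidity case of the positive mass theorem for metrics with corners along a hypersurface, which is exactly Miao's theorem in \cite{Mi}. There is nothing further to compare.
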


The theorem above was generalized by Y. Shi and L. Tam in \cite{ST}. There are some analogous rigidity results for the hyperbolic space (see \cite{M}, \cite{AD}, \cite{CH} and \cite{W}). A similar rigidity for the hemisphere $S^n_+$ was conjectured by M. Min-Oo in \cite{M2}:
\begin{Min}
Let $g$ be a smooth metric on the hemisphere $S^{n}_+$ with scalar curvature $R_g\geqslant n(n-1)$ such that the induced metric on $\partial S^n_+$ agrees with the standard metric on $\partial S^n_+$ and is totally geodesic. Then $g$ is isometric to the standard metric on $S^n_+$.
\end{Min}

This conjecture is true for $n=2$, in which case it follows by a theorem of Toponogov \cite{Top} (see also \cite{HW}). Recently, counterexamples were constructed by S. Brendle, F. C. Marques and A. Neves in \cite{BMN} for $n\geqslant 3$.  
We refer the reader to \cite{HW,ME,HuWu} for partial results concerning the Min-Oo's conjecture. In \cite{BM}, a rigidity result for small geodesic balls of $S^n$ was proved.

The next theorem is a rigidity result for cylinders $([a,b]\times\Sigma,dt^2+g_{\Sigma})$, where $\Sigma$ is a Riemann surface of genus greater than 1 and constant Gauss curvature equal to -1. This is the analogue of Miao's result and Min-Oo's Conjecture in our setting.

Recall that a three-manifold is irreducible if every embedded 2-sphere in $M$ bounds a 3-ball embedded in $M$.

\begin{Theorem}\label{Cor2}
Let $\Sigma$ be a compact Riemann surface of genus $g(\Sigma)\geqslant 2$ and $g_{\Sigma}$ a metric on $\Sigma$ with $K_{\Sigma}\equiv -1$. Let $(\Omega^3,g)$ be a compact orientable irreducible connected Riemannian three-manifold with boundary satisfying the following properties:
\begin{itemize}
\item $R_g\geqslant -2$.
\item $H_g\geqslant 0$.($H_g$ is the mean curvature of $\partial\Omega$ and the convention for the mean curvature is $\vec{H}_g=-H_g\eta$, where $\vec{H}$ is the mean curvature vector and $\eta$ is the outward normal vector).
\item Some connected component of $\partial \Omega$ is incompressible in $\Omega$ and with the induced metric is isometric to $ (\Sigma,g_{\Sigma}).$
\end{itemize}
Moreover, suppose that $\Omega$ does not contain any one-sided compact embedded surface. Then $(\Omega,g)$ is isometric to $([a,b]\times\Sigma,dt^2+g_{\Sigma}).$
\end{Theorem}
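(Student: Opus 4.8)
The plan is to reduce the global statement to the local rigidity of Theorem \ref{Thm1} by producing, inside $\Omega$, a two-sided compact embedded surface of genus $g(\Sigma)\ge 2$ that is area-minimizing in its homotopy (or isotopy) class and realizes the equality $|\Sigma|_g=4\pi(g(\Sigma)-1)$. First I would let $\Sigma_0$ denote the distinguished incompressible boundary component isometric to $(\Sigma,g_\Sigma)$, so that $|\Sigma_0|_g=4\pi(g(\Sigma)-1)$ by Gauss--Bonnet. Since $\Sigma_0$ is incompressible in $\Omega$ and $\Omega$ is compact, irreducible, and contains no one-sided closed embedded surfaces, the geometric measure theory / minimal surface machinery (Schoen--Yau, Meeks--Simon--Yau) produces a smooth two-sided compact embedded surface $\widetilde\Sigma$ in the interior of $\Omega$, homotopic to $\Sigma_0$ and of the same genus, which minimizes area in its homotopy class; here the boundary condition $H_g\ge 0$ (mean-convexity with the stated sign convention) is exactly what prevents the minimizing sequence from escaping through $\partial\Omega$ and guarantees the minimizer is interior (or else pushes onto a boundary component, which one handles separately). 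The incompressibility and the no-one-sided-surface hypothesis ensure the limit is non-trivial, connected, two-sided, and of genus $g(\Sigma)$.

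Next I would apply the area inequality \eqref{inequality} of Theorem \ref{Thm1} to $\widetilde\Sigma$: since $R_g\ge -2$ and $\widetilde\Sigma$ is locally area-minimizing, $|\widetilde\Sigma|_g\ge 4\pi(g(\Sigma)-1)$. On the other hand, $\widetilde\Sigma$ is homotopic to $\Sigma_0$ and minimizes area in that class, so $|\widetilde\Sigma|_g\le |\Sigma_0|_g=4\pi(g(\Sigma)-1)$. Hence equality holds, and the rigidity clause of Theorem \ref{Thm1} gives a neighborhood of $\widetilde\Sigma$ isometric to $((-\epsilon,\epsilon)\times\Sigma,dt^2+g_\Sigma)$, with $\widetilde\Sigma$ totally geodesic and of constant Gauss curvature $-1$. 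At this point one runs the standard continuation argument (as in \cite{BBN,CG}, and as used for Corollary \ref{Cor1}): let $I$ be the maximal interval such that the normal exponential map $f(t,x)=\exp_x(t\nu(x))$ is an isometry from $I\times\Sigma$ onto an open subset of $\Omega$; this set is open by the local rigidity and closed because along it the metric is the product $dt^2+g_\Sigma$, which extends, so $I\times\Sigma$ maps isometrically until the image reaches $\partial\Omega$. Since $\Omega$ is connected and compact, the flow must terminate on the boundary on both sides, producing an isometry $([a,b]\times\Sigma,dt^2+g_\Sigma)\to(\Omega,g)$; the incompressibility and irreducibility hypotheses guarantee the exponential flow does not wrap around or develop self-intersections before exhausting $\Omega$.

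The main obstacle I anticipate is the existence and regularity of the area-minimizer $\widetilde\Sigma$ together with the control of its topological type and its position relative to $\partial\Omega$. One must invoke the right version of the solution to the Plateau-type problem in the homotopy class of an incompressible surface in a mean-convex manifold-with-boundary — ruling out that the minimizing sequence degenerates (bubbling off spheres, which irreducibility forbids), collapses to a lower-genus surface or a one-sided surface (forbidden by hypothesis and by incompressibility), or converges to a boundary component other than $\Sigma_0$. If the minimizer does limit onto $\partial\Omega$, it must limit onto a component with area $\le 4\pi(g(\Sigma)-1)$ and genus $\ge g(\Sigma)$ — but then $H_g\ge 0$ forces that component to be minimal, stable, and (again by the Gauss equation, $R_g\ge -2$, and Gauss--Bonnet) of genus exactly $g(\Sigma)$ with constant curvature $-1$, so one may simply take it as $\widetilde\Sigma$ and still conclude via the local rigidity plus a one-sided version of the continuation argument. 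Making all of these alternatives rigorous — and checking that the totally geodesic product collar does not bump into $\partial\Omega$ from the wrong side — is where the real work lies; the rest is a direct appeal to Theorem \ref{Thm1} and the by-now-routine continuation argument.
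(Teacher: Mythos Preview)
Your outline is essentially the paper's proof: minimize area in the isotopy class of the distinguished boundary component $\Sigma_0$ (the paper cites Hass--Scott \cite{HS} and Meeks--Simon--Yau \cite{MSY} for this, using irreducibility, incompressibility, $H_g\ge 0$, and the no-one-sided-surface hypothesis), compare the minimizer's area with $|\Sigma_0|=4\pi(g(\Sigma)-1)$ to force equality in \eqref{inequality}, and then run the local rigidity of Theorem~\ref{Thm1} together with a continuation argument.

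Two points where the paper is sharper than your sketch. First, after showing the minimizer has area $4\pi(g(\Sigma)-1)$, the paper observes one may simply take $\overline{\Sigma}=\partial\Omega^{(1)}$ itself and flow \emph{inward} from the boundary; this avoids having to flow in two directions from an interior surface. Second --- and this is the piece you flag as ``where the real work lies'' but do not resolve --- the no-one-sided-surface hypothesis is used not in the existence of the minimizer but in the \emph{termination} of the continuation: if $l$ is the supremum of times for which $f(t,x)=\exp_x(t\nu(x))$ is an injective local isometry and $f(\overline{\Sigma}_l)$ lies in the interior, the paper shows $f|_{\overline{\Sigma}_l}$ must fail to be injective, and that any self-intersection forces the tangent planes to agree with opposite normals, so $f(\overline{\Sigma}_l)$ is a one-sided compact embedded surface, contradicting the hypothesis. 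That explicit argument is the missing step in your continuation paragraph; incompressibility and irreducibility alone do not rule out this wrapping.
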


We note that the similar result for cylinders $[a,b]\times S^2$, where $S^2$ is the round unit sphere, does not hold. In fact, consider a rotationally symmetric metric $g=u(t)^4(dt^2+g_{S^2})$ on $\mathbb{R}\times S^2$ with constant scalar curvature equal to $2$ such that $u(0)>1$ and $u^{\prime}(0)=0$ (see \cite{RS}). Choosing $a>0$ such that $u(a)=u(0)$, we have that the rescaled metric $\overline{g}=u(0)^{-4}\,g$ on $[0,a]\times S^2$ gives a counterexample. 

The following example justifies the requirement that $\Omega$ does not contain any one-sided compact embedded surface. Let $(\hat{\Sigma},g_{\hat{\Sigma}})$ be a compact non-orientable surface with constant Gauss curvature equal to -1. Denote by $\Sigma$ the orientable double covering of $\hat{\Sigma}$ and by $\pi$ the covering map. Now, let $g_{\Sigma}=\pi^*g_{\hat{\Sigma}}$ and consider $(M=[-k,k]\times\Sigma,g=dt^2+g_{\Sigma})$. Take the subgroup $\Gamma=\{id,f\}\subset\textrm{Iso}(M,g)$, where $f$ is defined by $f(t,x)=(-t,\phi(x))$ and $\phi\in\textrm{Iso}(\Sigma,g_{\Sigma})$ is the non-trivial deck transformation of $\pi:\Sigma\longrightarrow\hat{\Sigma}$. Now, consider the Riemannian manifold $(\Omega,g_{\Omega})$, where $\Omega=M/\Gamma$ and $g_{\Omega}$ is the quotient metric. Note that $\Omega$ is orientable and irreducible, $R_{g_{\Omega}}=-2$, $H_{g_{\Omega}}=0$, $\partial\Omega$ is incompressible in $\Omega$ and with the induced metric is isometric to $(\Sigma,g_{\Sigma})$. Finally, observe that $\partial\Omega$ has only one component and that the image of $\{0\}\times\Sigma$ is a one-sided compact embedded surface in $\Omega$.  
{\small
\begin{Ak} This work is part of the author's Ph.D. thesis at IMPA. The author would like to thank his advisor Fernando Cod\' a Marques for his constant encouragement and for all stimulating discussions during the preparation of this work. The author was financially suported by CNPq-Brazil and FAPERJ-Brazil.
\end{Ak}}

\section{Proof of the inequality (\ref{inequality})}

Let $\nu$ be the unit normal vector field to $\Sigma$. For each function $\phi\in C^{\infty}(\Sigma)$, we have, by the second variation formula of area and the fact that $\Sigma$ is locally area minimizing, that
$$
\int_{\Sigma}(\Ric(\nu,\nu)+|A|^2)\,\phi^2\,d\sigma\leqslant\int_{\Sigma}|\nabla\phi|^2\,d\sigma,
$$
where $A$ and $d\sigma$ denote the second fundamental form and the area element of $\Sigma$, respectively. Choosing $\phi=1$, we obtain
\begin{equation}
\int_{\Sigma}(\Ric(\nu,\nu)+|A|^2)\,d\sigma\leqslant 0. \label{inequality2}
\end{equation}

Now, the Gauss equation implies
\begin{equation}
\Ric(\nu,\nu)=\dfrac{1}{2}R_g-K_{\Sigma}-\dfrac{1}{2}|A|^2, \label{inequality3}
\end{equation}
where $K_{\Sigma}$ denotes the Gauss curvature of $\Sigma$.

Substituting (\ref{inequality3}) in (\ref{inequality2}), we get
\begin{equation}
\dfrac{1}{2}\int_{\Sigma}(R_g+|A|^2)\,d\sigma\leqslant\int_{\Sigma}K_{\Sigma}\,d\sigma. \label{inequality4}
\end{equation}

By the Gauss-Bonnet theorem and the fact that $R_g\geqslant -2$ and $|A|^2\geqslant 0$, we have
$$
-|\Sigma|_g\leqslant 4\pi(1-g(\Sigma)).
$$

Therefore,  $|\Sigma|_g\geqslant 4\pi(g(\Sigma)-1).$

\section{Equality Case}
\begin{Prop}\label{Prop1}
If $\Sigma$ attains the equality in (\ref{inequality}), then $\Sigma$ is totally geodesic. Moreover, $\Ric(\nu,\nu)= 0$ and $R_g= -2$ on $\Sigma$ and $\Sigma$ has constant Gauss curvature equal to -1 with the induced metric.
\end{Prop}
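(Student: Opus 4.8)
The plan is to run the chain of inequalities from Section 2 backwards under the equality hypothesis, and then to squeeze out the pointwise conclusions, the last of which requires using the \emph{full} stability inequality and not merely the test function $\phi\equiv 1$.

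First I would note that the proof of (\ref{inequality}) is really the string of inequalities
$$-|\Sigma|_g\;\leqslant\;\tfrac12\int_{\Sigma}(R_g+|A|^2)\,d\sigma\;\leqslant\;\int_{\Sigma}K_{\Sigma}\,d\sigma\;=\;2\pi\chi(\Sigma)\;=\;-4\pi(g(\Sigma)-1),$$
whose extreme terms coincide exactly when (\ref{inequality}) is an equality. Hence every intermediate inequality is an equality. The first one uses only $R_g\geqslant -2$ and $|A|^2\geqslant 0$, so its being an equality gives $\int_{\Sigma}\bigl((R_g+2)+|A|^2\bigr)\,d\sigma=0$; since both summands are nonnegative, this forces pointwise on $\Sigma$ that $R_g=-2$ and $A\equiv 0$, i.e. $\Sigma$ is totally geodesic. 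The second inequality, which is (\ref{inequality4}), is obtained from (\ref{inequality2}) by substituting the identity (\ref{inequality3}); therefore it is an equality precisely when (\ref{inequality2}) is, and together with $A\equiv 0$ this yields $\int_{\Sigma}\Ric(\nu,\nu)\,d\sigma=0$.

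The remaining task, upgrading this integral identity to $\Ric(\nu,\nu)\equiv 0$, is the main step. With $A\equiv 0$, the second variation inequality reads $\int_{\Sigma}\Ric(\nu,\nu)\,\phi^2\,d\sigma\leqslant\int_{\Sigma}|\nabla\phi|^2\,d\sigma$ for every $\phi\in C^\infty(\Sigma)$; equivalently the quadratic form $Q(\phi):=\int_{\Sigma}\bigl(|\nabla\phi|^2-\Ric(\nu,\nu)\,\phi^2\bigr)\,d\sigma$ is nonnegative on $C^\infty(\Sigma)$. But $Q(1)=-\int_{\Sigma}\Ric(\nu,\nu)\,d\sigma=0$ by the previous paragraph, so the constant function $1$ is a global minimizer of $Q$. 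Differentiating $s\mapsto Q(1+s\psi)$ at $s=0$ for arbitrary $\psi\in C^\infty(\Sigma)$ gives the Euler--Lagrange equation $-\Delta\phi-\Ric(\nu,\nu)\phi=0$ at $\phi\equiv 1$, that is, $\Ric(\nu,\nu)=0$ on $\Sigma$.

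Finally, substituting $R_g=-2$, $A\equiv 0$ and $\Ric(\nu,\nu)=0$ back into the Gauss equation (\ref{inequality3}) gives $0=-1-K_{\Sigma}$, so $K_{\Sigma}\equiv -1$ and the induced metric on $\Sigma$ has constant Gauss curvature $-1$. I expect the only genuinely delicate point to be the variational argument identifying $\phi\equiv 1$ as a minimizer of $Q$ and reading off the Euler--Lagrange equation; the rest is bookkeeping with the inequalities already established in Section 2.
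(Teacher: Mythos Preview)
Your proof is correct and is essentially the paper's argument: equality in the chain from Section~2 forces $R_g=-2$, $A\equiv 0$, and the constant function $1$ to minimize the stability quadratic form, whence $1$ lies in the kernel of the Jacobi operator and $\Ric(\nu,\nu)\equiv 0$. The only cosmetic difference is order: the paper first reads off $\Ric(\nu,\nu)+|A|^2=0$ from equality in (\ref{inequality2}) and then $R_g=-2$, $A=0$ from the remaining equality, whereas you extract $R_g=-2$, $A=0$ first and then $\Ric(\nu,\nu)=0$; your explicit Euler--Lagrange derivation is exactly what underlies the paper's phrase ``the constant functions are in the kernel of the Jacobi operator.''
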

\begin{proof}
If $|\Sigma|_g=4\pi(g(\Sigma)-1)$, then it follows from the proof of (\ref{inequality}) that the inequalities (\ref{inequality2}) and (\ref{inequality4}) are in fact equalities. The equality in (\ref{inequality2}) together with the stability of $\Sigma$ implies that the constant functions are in the kernel of the Jacobi operator $L=\Delta_{\Sigma}+Ric(\nu,\nu)+|A|^2$ of $\Sigma$. Therefore, $Ric(\nu,\nu)+|A|^2=0$ on $\Sigma$.

Now, the equality in (\ref{inequality4}) implies that $R_g=-2$ and $A=0$ on $\Sigma$. Finally, by (\ref{inequality3}), we conclude that $\Sigma$ has constant Gauss curvature equal to -1 with the induced metric.
\end{proof}

The construction in the next proposition is fundamental to conclude the rigidity in the Theorem \ref{Thm1}. The same construction was used in \cite{ACG} and \cite{BBN} to prove similar rigidity results. We prove it here for completeness.  

\begin{Prop}\label{prop2}
If $\Sigma$ attains the equality in (\ref{inequality}), then there exists $\epsilon>0$ and a smooth family $\Sigma_t\subset M$, $t\in(-\epsilon,\epsilon)$, of compact embedded surfaces satisfying:
\begin{itemize}
\item $\Sigma_t=\{\textrm{exp}_x(w(t,x)\nu(x)):x\in M\}$, where $w:(-\epsilon,\epsilon)\times \Sigma\longrightarrow \mathbb{R}$ is a smooth function such that $$w(0,x)=0, \dfrac{\partial w}{\partial t}(0,x)=1\, \, \textrm{and}\,\, \int_{\Sigma}(w(t,\cdot)-t)\,d\sigma=0.$$ 
\item $\Sigma_t$ has constant mean curvature for all $t\in(-\epsilon,\epsilon).$
\end{itemize}
\end{Prop}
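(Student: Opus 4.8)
The plan is to apply the implicit function theorem to a map whose zeros are exactly the normal graphs over $\Sigma$ with constant mean curvature. Fix $\alpha\in(0,1)$. For $|t|$ small and $u$ small in $C^{2,\alpha}(\Sigma)$, let $F(t,u)\in C^{0,\alpha}(\Sigma)$ denote the mean curvature function, pulled back to $\Sigma$, of the surface $\{\exp_x((t+u(x))\nu(x)):x\in\Sigma\}$; this is smooth in $(t,u)$ because $(t,x)\mapsto\exp_x(t\nu(x))$ is a diffeomorphism from a slab $(-\delta,\delta)\times\Sigma$ onto a tubular neighborhood of $\Sigma$, so the graph is an embedded surface for $(t,u)$ near $(0,0)$. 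Set $X=\{u\in C^{2,\alpha}(\Sigma):\int_\Sigma u\,d\sigma=0\}$ and $Y=\{v\in C^{0,\alpha}(\Sigma):\int_\Sigma v\,d\sigma=0\}$, and define
$$
\Psi(t,u)=F(t,u)-\frac{1}{|\Sigma|_g}\int_\Sigma F(t,u)\,d\sigma\in Y,\qquad (t,u)\in(-\delta,\delta)\times X.
$$
Then $\Psi(t,u)=0$ holds precisely when $F(t,u)$ is constant, i.e. when the corresponding normal graph has constant mean curvature.

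Since $\Sigma$ is totally geodesic by Proposition \ref{Prop1}, it is minimal, so $F(0,0)=0$ and $\Psi(0,0)=0$. The crucial step is to compute $D_u\Psi(0,0)$. The first variation of mean curvature gives $D_uF(0,0)[\phi]=\Delta_\Sigma\phi+(\Ric(\nu,\nu)+|A|^2)\phi$, up to the sign fixed by the mean curvature convention, and here Proposition \ref{Prop1} enters decisively: it asserts $A=0$ and $\Ric(\nu,\nu)=0$ on $\Sigma$, so $D_uF(0,0)[\phi]=\Delta_\Sigma\phi$ up to sign. Projecting onto $Y$ and using $\int_\Sigma\Delta_\Sigma\phi\,d\sigma=0$, we obtain $D_u\Psi(0,0)[\phi]=\Delta_\Sigma\phi$ up to sign. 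This is an isomorphism $X\to Y$: it is injective because a mean-zero element of the kernel of $\Delta_\Sigma$ on the closed surface $\Sigma$ is constant, hence $0$; and it is surjective because for each $v\in Y$ the Fredholm alternative yields $\phi\in C^{2,\alpha}(\Sigma)$ with $\Delta_\Sigma\phi=v$, which we normalize to lie in $X$. By the implicit function theorem there exist $\epsilon\in(0,\delta)$ and a smooth curve $t\mapsto u(t)\in X$ with $u(0)=0$ and $\Psi(t,u(t))=0$ for all $t\in(-\epsilon,\epsilon)$.

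It then remains to package the conclusion. Set $w(t,x)=t+u(t)(x)$ and $\Sigma_t=\{\exp_x(w(t,x)\nu(x)):x\in\Sigma\}$. By construction $\Sigma_t$ has constant mean curvature, $w(0,\cdot)=0$, and $\int_\Sigma(w(t,\cdot)-t)\,d\sigma=\int_\Sigma u(t)\,d\sigma=0$; moreover $u(t)$ solves a quasilinear elliptic equation with coefficients depending smoothly on $t$, so elliptic regularity makes $u(t)$ smooth in $x$ and $w$ jointly smooth in $(t,x)$. To see $\partial_t w(0,\cdot)=1$, equivalently $u'(0)=0$, I would differentiate $\Psi(t,u(t))=0$ at $t=0$ to get $D_t\Psi(0,0)+D_u\Psi(0,0)[u'(0)]=0$; since $D_t\Psi(0,0)$ is the $Y$-projection of $D_uF(0,0)[1]=\Delta_\Sigma 1=0$, it vanishes, hence $\Delta_\Sigma u'(0)=0$, and $u'(0)\in X$ forces $u'(0)=0$. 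Finally, after shrinking $\epsilon$, the map $(t,x)\mapsto\exp_x(w(t,x)\nu(x))$ is a $C^1$-small perturbation of the tubular-neighborhood diffeomorphism $(t,x)\mapsto\exp_x(t\nu(x))$, so each $\Sigma_t$ is a compact embedded surface. The one genuine obstacle in the argument is identifying the linearization with the Laplacian and recognizing that it is invertible on mean-zero functions; this relies entirely on the equality-case information of Proposition \ref{Prop1}, without which the Jacobi operator $L$ could have nontrivial kernel and the implicit function theorem would not close.
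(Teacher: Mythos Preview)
Your proof is correct and follows essentially the same approach as the paper: both set up the map $\Psi(t,u)=H_{\Sigma_{t+u}}-\frac{1}{|\Sigma|}\int_\Sigma H_{\Sigma_{t+u}}\,d\sigma$ on the mean-zero H\"older spaces, use Proposition~\ref{Prop1} to identify the linearization with $\Delta_\Sigma$, and apply the implicit function theorem. You actually supply more detail than the paper does---in particular the verification that $u'(0)=0$ via $D_t\Psi(0,0)=0$ and the embedding/regularity remarks---whereas the paper simply asserts ``it is easy to see that $w(t,x)$ satisfies all the conditions.''
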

\begin{proof}
By the previous proposition, we have $L=\Delta_{\Sigma}.$ Fix $\alpha\in(0,1)$ and consider the Banach spaces $X=\{u\in C^{2,\alpha}(\Sigma):\int_{\Sigma}u\,d\sigma=0\}$ and $Y=\{u\in C^{0,\alpha}(\Sigma):\int_{\Sigma}u\,d\sigma=0\}$. For each real function $u$ defined on $\Sigma$, let $\Sigma_u=\{\textrm{exp}_x(u(x)\nu(x)):x\in\Sigma\}$, where $\nu$ is the unit normal vector field to $\Sigma$.

Choose $\epsilon>0$ and $\delta>0$ such that $\Sigma_{u+t}$ is a compact surface of class $C^{2,\alpha}$ for all $(t,u)\in (-\epsilon,\epsilon)\times B(0,\delta)$, where $B(0,\delta)=\{u\in C^{2,\alpha}(\Sigma):\Arrowvert u\Arrowvert_{C^{2,\alpha}(\Sigma)}<\delta\}$. Denote by $H_{\Sigma_{u+t}}$ the mean curvature of $\Sigma_{u+t}$.

Now, consider the application $\Psi:(-\epsilon,\epsilon)\times B(0,\delta)\subset X\longrightarrow Y$ defined by
$$
\Psi(t,u)=H_{\Sigma_{u+t}}-\dfrac{1}{|\Sigma|}\int_{\Sigma}H_{\Sigma_{u+t}}\,d\sigma.
$$
Notice that $\Psi(0,0)=0$ because $\Sigma_{0}=\Sigma.$

The next step is to compute $D\Psi(0,0)\cdot v$, for $v\in X$. We have 
\begin{eqnarray*}
D\Psi(0,0)\cdot v&=&\dfrac{d\Psi}{ds}(0,sv)\arrowvert_{s=0}\\
&=&\dfrac{d}{ds}\Big(H_{\Sigma_{sv}}-\dfrac{1}{|\Sigma|}\int_{\Sigma}H_{\Sigma_{sv}\,d\sigma}\Big)\big\arrowvert_{s=0}\\
&=&-Lv-\dfrac{1}{|\Sigma|}\int_{\Sigma}Lv\,d\sigma\\
&=&-\Delta_{\Sigma}v,
\end{eqnarray*}
where the last equality follows from the fact that $L=\Delta_{\Sigma}.$

Since $\Delta_{\Sigma}:X\longrightarrow Y$ is a linear isomorphism, we have, by the implicit function theorem, that there exist $0<\epsilon_1<\epsilon$ and $u(t)=u(t,\cdot)\in B(0,\delta)$ for $t\in (-\epsilon_1,\epsilon_1)$ such that 
$$
u(0)=0\ \ \textrm{and}\ \ \Psi(t,u(t))=0,\forall t\in (-\epsilon_1,\epsilon_1).
$$

Thus, defining $w(t,x)=u(t,x)+t$, for $(t,x)\in(-\epsilon_1,\epsilon_1)\times\Sigma$,  we have that all the surfaces $\Sigma_t=\{\textrm{exp}_x(w(t,x)\nu(x)):x\in\Sigma\}$ have constant mean curvature. It is easy to see that $w(t,x)$ satisfies all the conditions stated in the proposition.
\end{proof} 

Let $\nu(t)$ denote the unit normal vector field to $\Sigma_t$ such that $\nu(0)=\nu$. In our convention, the mean curvature $H(t)$ of $\Sigma_t$ satisfies $\vec{H}(t)=-H(t)\nu(t)$, where $\vec{H}(t)$ is the mean curvature vector of $\Sigma_t$. In this case, we have 
\begin{equation}
\dfrac{d}{dt}|\Sigma_t|_g=H(t)\int_{\Sigma}\big\langle\nu(t),\dfrac{\partial f}{\partial t}(t,\cdot)\big\rangle \,d\sigma, \label{equation 1}
\end{equation}
where $f(t,x)=\textrm{exp}_x(w(t,x)\,\nu(t))$, $x\in \Sigma.$ Notice that $\dfrac{\partial f}{\partial t}(0,x)=\nu(x)$, so we can suppose, decreasing $\epsilon$ if necessary, that $\int_{\Sigma}\big\langle\nu(t),\dfrac{\partial f}{\partial t}(t,\cdot)\big\rangle\,\,d\sigma$ is positive for all $t\in (-\epsilon,\epsilon)$. Moreover, we can assume that $|\Sigma|_g\leqslant|\Sigma_t|_g$ for all $t\in(-\epsilon,\epsilon)$, because $\Sigma$ is locally area-minimizing.

Before we prove the next proposition, we will recall some facts about the Yamabe problem on manifolds with boundary which was first studied by J. F. Escobar \cite{E}. Let $(M^n,g)$ be a compact Riemannian manifold with boundary $\partial M\neq\emptyset$. It is a basic fact that the existence of a metric $\overline{g}$ in the conformal class of $g$ having scalar curvature equal to $C\in\mathbb{R}$ and the boundary being a minimal hypersurface is equivalent to the existence of a positive smooth function $u\in C^{\infty}(M)$ satisfying

\begin{equation}
\left\{
\begin{aligned} 
\Delta_gu-\dfrac{n-2}{4(n-1)}R_gu+\dfrac{n-2}{4(n-1)}Cu^{(n+2)/(n-2)}&=0\ \ \textrm{on}\ \ M\\
\dfrac{\partial u}{\partial\eta}+\dfrac{n-2}{2(n-1)}H_gu&=0\ \ \textrm{on}\ \ \partial M\label{Yamabe}
\end{aligned}
\right.
\end{equation}
where $\eta$ is the outward normal vector with respect to the metric g.

If $u$ is a solution of the equation above, then $u$ is a critical point of the following functional
$$
Q_g(\phi)=\dfrac{\int_M(|\nabla_g\phi|_g^2+\frac{n-2}{4(n-1)}\,R_g\,\phi^2)\,dv+\frac{n-2}{2(n-1)}\int_{\partial M}H_g\,\phi^2\,d\sigma}{(\int_M|\phi|^{2n/(n-2)}\,dv)^{(n-2)/n}}.
$$

The Sobolev quotient $Q(M)$ is then defined by
$$
Q(M)=\inf\{Q_g(\phi):\phi\in C^1(M), \phi\neq 0\}.
$$

It is a well known fact that $Q(M)\leqslant Q(S_{+}^n)$, where $S_{+}^n$ is the upper standard hemisphere, and that if $Q(M)<Q(S_+^n)$, then there exists a smooth minimizer for the functional above. This function turns out to be a positive solution of (\ref{Yamabe}), with a constant $C$ that has the same sign as $Q(M)$.

\begin{Prop}\label{prop3}
There exists $0<\epsilon_1<\epsilon$ such that $H(t)\leqslant 0$  $\forall t\in [0,\epsilon_1)$. 
\end{Prop}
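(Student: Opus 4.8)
The plan is to argue by contradiction: suppose there is a sequence $t_k \downarrow 0$ with $H(t_k) > 0$. The key idea is that $\Sigma_{t_k}$, having positive mean curvature (with respect to $\nu(t_k)$, which points roughly in the direction of increasing $t$), together with $\Sigma_0 = \Sigma$ being minimal, bounds a compact region $\Omega_k$ on the negative side whose boundary has nonnegative mean curvature with respect to the \emph{outward} normal. More precisely, for small $t>0$ the surface $\Sigma_t$ is a graph over $\Sigma$ in the normal exponential coordinates, and the region $\Omega_t$ between $\Sigma$ and $\Sigma_t$ is diffeomorphic to $[0,t]\times\Sigma$; its boundary consists of $\Sigma$ (minimal, hence $H_g=0$) and $\Sigma_t$, and on $\Sigma_t$ the outward normal of $\Omega_t$ is $\nu(t)$, so the mean curvature of $\partial\Omega_t$ there equals $H(t)>0 \geq 0$. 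Thus $(\Omega_{t_k},g)$ is a compact manifold with boundary satisfying $R_g \geq -2$ and $H_g \geq 0$ on $\partial\Omega_{t_k}$.

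Next I would invoke the Yamabe-type problem for manifolds with boundary recalled above. With $n=3$ the coefficient $\frac{n-2}{4(n-1)} = \frac18$ and $\frac{n-2}{2(n-1)} = \frac14$. I claim the Sobolev quotient satisfies $Q(\Omega_{t_k}) < Q(S^3_+)$ and in fact is \emph{negative} — or at least that $C$ can be taken negative. To see this, test the functional $Q_g$ with $\phi \equiv 1$: the numerator is
\begin{equation*}
\frac18 \int_{\Omega_{t_k}} R_g \, dv + \frac14 \int_{\partial\Omega_{t_k}} H_g \, d\sigma.
\end{equation*}
Now $R_g \geq -2$ everywhere, but crucially $R_g = -2$ only on $\Sigma = \{t=0\}$ and we expect (this is where the contradiction will come from) that if $R_g \equiv -2$ on all of $\Omega_{t_k}$ and $H_g \equiv 0$ on $\partial\Omega_{t_k}$, then the region is already rigid. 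So either the numerator is strictly negative for some test function (giving $Q(\Omega_{t_k}) < 0 < Q(S^3_+)$, hence a smooth positive solution $u$ of \eqref{Yamabe} with $C < 0$), producing a conformal metric $\bar g = u^4 g$ with scalar curvature $C < 0$ (we would rescale to normalize $C = -2$) and minimal boundary; or else $R_g \equiv -2$ and $H_g \equiv 0$, which I handle separately below.

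In the first case, the new metric $\bar g$ on $\Omega_{t_k}$ has $R_{\bar g} = C < 0$, and after scaling $R_{\bar g} = -2$, with $\partial\Omega_{t_k}$ minimal for $\bar g$. But now I would restrict attention to the component $\Sigma_{t_k}$ of $\partial\Omega_{t_k}$ and run the inequality argument of Section 2 again, \emph{for the metric} $\bar g$: since $\Sigma_{t_k}$ is a closed surface of genus $g(\Sigma) \geq 2$ which is a boundary component with $R_{\bar g} \geq -2$ nearby, the Gauss--Bonnet estimate combined with minimality forces $|\Sigma_{t_k}|_{\bar g} \geq 4\pi(g(\Sigma)-1)$; meanwhile the minimality and $R_{\bar g} = -2$ together with the conformal change formulas yield that $\Sigma_{t_k}$ with the $\bar g$-induced metric has total Gauss curvature at most $-|\Sigma_{t_k}|_{\bar g}/2 + \dots$ — the point being to derive that $u$ must be constant from Hopf's maximum principle applied to the equation for $u$, since along the minimal boundary the Neumann condition $\partial u/\partial\eta = -\frac14 H_g u \leq 0$ and the sign of $C$ force $u$ to achieve its extremum on the interior, where $\Delta_g u = \frac18 R_g u - \frac18 C u^5$; comparing signs shows $u \equiv$ const is the only possibility, contradicting $C < 0 \neq -2 \cdot(\text{scaling})$ unless $R_g$ was already $\equiv -2$ and $H_g \equiv 0$.

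In that remaining case — $R_g \equiv -2$ on $\Omega_{t_k}$ and $H_g \equiv 0$ on $\partial\Omega_{t_k}$ — each component of $\partial\Omega_{t_k}$ is a \emph{minimal} surface in a three-manifold with $R_g \geq -2$, so applying Proposition~\ref{Prop1}'s argument to $\Sigma_{t_k}$ directly would give that $\Sigma_{t_k}$ is totally geodesic with constant curvature $-1$; but $\Sigma_{t_k}$ has constant mean curvature $H(t_k) > 0$, a contradiction with being minimal. Hence no such sequence $t_k$ exists, and $H(t) \leq 0$ for all small $t > 0$, i.e. on some interval $[0,\epsilon_1)$. I expect the main obstacle to be the careful bookkeeping in the conformal-change/maximum-principle step: one must choose test functions or directly analyze \eqref{Yamabe} to rule out $C < 0$ strictly, and the cleanest route is probably to observe that a solution with $C<0$ and minimal boundary gives, via the second variation on $\partial\Omega_{t_k}$ in the metric $\bar g$, a genus-$\geq 2$ minimal surface in a manifold with $R_{\bar g}\geq C<0$, then normalize and reach a contradiction with the strict Gauss--Bonnet inequality forced by $u$ non-constant. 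Getting the inequalities to point the right way and confirming that equality in all of them pins down $u \equiv$ const is the delicate part; everything else is a routine application of the implicit function theorem setup already in place and Hopf's lemma.
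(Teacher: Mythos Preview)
Your overall architecture---contradiction, build the slab between $\Sigma$ and $\Sigma_{t_k}$, solve the Yamabe problem with minimal boundary, apply a maximum principle---matches the paper's. But two of the steps you flag as ``bookkeeping'' are in fact the substantive content of the proof, and your sketch does not supply them.

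\textbf{Establishing $Q(\Omega_{t_k})<0$.} Your dichotomy ``either $Q<0$ or else $R_g\equiv -2$ and $H_g\equiv 0$'' is false: since $R_g\geqslant -2$ and $H_g\geqslant 0$, the numerator with $\phi\equiv 1$ could be large and positive without any rigidity. The paper's argument uses two facts you omit. First, Proposition~\ref{Prop1} gives $R_g=-2$ \emph{on} $\Sigma$, so by continuity $-2\leqslant R_{g_k}\leqslant -1$ on the thin slab for $k$ large; this makes the interior term $\leqslant -\tfrac18\operatorname{Vol}(\Omega_{t_k})$. Second, because the Jacobi operator of $\Sigma$ equals $\Delta_\Sigma$, one has $H'(0)=0$, hence $H(t_k)=O(t_k^2)$, while $\operatorname{Vol}(\Omega_{t_k})\sim t_k|\Sigma|_g$; so the boundary term is $o(t_k)$ and cannot compete. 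Only with these two estimates does $Q_{g_k}(1)<0$ follow.

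\textbf{The contradiction after the conformal change.} Once $\bar g=u^4g_k$ has $R_{\bar g}=-2$ and minimal boundary, the paper does \emph{not} argue that $u$ is constant. Instead, from $\Delta_{g_k}u+\tfrac14 u-\tfrac14 u^5\geqslant 0$ and the Neumann condition $\partial u/\partial\eta=-\tfrac14 H_{g_k}u\leqslant 0$, the strong maximum principle and Hopf lemma give $u<1$ strictly (the constant $u\equiv 1$ is ruled out since $H_{g_k}>0$ on $\Sigma_{t_k}$ but $H_{\bar g}=0$). This yields $|\Sigma|_{\bar g}<|\Sigma|_{g_k}=4\pi(g(\Sigma)-1)$. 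Your idea of applying the area inequality \eqref{inequality} directly to the boundary component $\Sigma_{t_k}$ in $\bar g$ does not work: that surface is $\bar g$-minimal but not known to be stable or area-minimizing. The paper closes the gap by invoking Hass--Scott (or Meeks--Simon--Yau) to produce an embedded $\overline\Sigma$ minimizing $\bar g$-area in the isotopy class of $\Sigma$ inside $\Omega_{t_k}$; then $|\overline\Sigma|_{\bar g}\leqslant|\Sigma|_{\bar g}<4\pi(g(\Sigma)-1)$, contradicting \eqref{inequality} applied to $\overline\Sigma$ in $(\Omega_{t_k},\bar g)$.
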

\begin{proof}
Suppose, by contradiction, that there exists a sequence $\epsilon_k\rightarrow 0$, $\epsilon_k>0,$ such that $H(\epsilon_k)>0$ for all $k$. Consider $(V_k,g_k)$, where $V_k=[0,\epsilon_k]\times\Sigma$ and $g_k$ is the pullback of the metric $g$ by $f\arrowvert_{V_k}:V_k\longrightarrow M.$ Therefore, $V_k$ is a compact three-manifold with boundary satisfying
\begin{itemize}
\item $R_{g_k}\geqslant -2$.
\item The mean curvature of $\partial V_k$ with respect to inward normal vector, denoted by $H_{g_k}$, is nonnegative. More precisely, $\partial V_k=\Sigma\cup\Sigma_{\epsilon_k}$ where $\Sigma$ is a minimal surface and $\Sigma_{\epsilon_k}$ has positive constant mean curvature.
\item $|\Sigma|_{g_k}=4\pi(g(\Sigma)-1).$ 
\end{itemize}

\begin{Claim}
For k sufficiently large, we have $Q(V_k)<0$. In particular, this implies $Q(V_k)<Q(S_+^3).$
\end{Claim}

\begin{proof}
By Proposition \ref{Prop1}, we have $R_g=-2$ on $\Sigma$. Therefore, by continuity, we have $-2\leqslant R_{g_k}\leqslant -1$ on $V_k$ for $k$ sufficiently large. Choosing $\phi=1$, we obtain
\begin{eqnarray*}
Q_{g_k}(\phi)&=&\dfrac{\frac{1}{8}\int_{V_k}R_{g_k}\,dv_k+\frac{1}{4}\int_{\partial V_k}H_{g_k}\,d\sigma_k}{Vol(V_k)^{1/3}}\\
&\leqslant&\dfrac{-\frac{1}{8}Vol(V_k)+\frac{1}{4}H(\epsilon_k)|\Sigma_{\epsilon_k}|_{g_k}}{Vol(V_k)^{1/3}}.
\end{eqnarray*}

Since $\dfrac{\partial f}{\partial t}(0,x)=\nu(x)$ and the stability operator of $\Sigma$ is equal to $\Delta_{\Sigma}$, we obtain that $\dfrac{d}{dt}H(t)\arrowvert_{t=0}=0.$ Therefore, we conclude that $H(\epsilon_k)=O(\epsilon_k^2)$ because $H(0)=H_{\Sigma}=0$. Moreover, if $V(t)=[0,t]\times\Sigma$ and $g_t=(f\arrowvert_{V(t)})^*g$, we have that
\begin{eqnarray*}
\Vol(V(t))&=&\Vol(V(t),g_t)\\
&=&\int_{[0,t]\times\Sigma}(f\arrowvert_{V(t)})^*dv\\
&=&\int_{[0,t]\times\Sigma}h(s,x)\,ds\wedge d\sigma\\
&=&\int_0^t\int_{\Sigma}h(s,x)\,d\sigma\,ds,
\end{eqnarray*}
where $h$ is defined  by $h(s,x)=dv(\frac{\partial f}{\partial s}(s,x),Df(s,x)\,e_1, Df(s,x)\,e_2)$ and \linebreak$\{e_1,e_2\}\subset T\Sigma$ is a positive orthonormal basis with respect to the induced metric on $\Sigma$.
From this, we get
$$
\dfrac{d}{dt}\Vol(V(t))\arrowvert_{t=0}=\int_{\Sigma}h(0,x)\,d\sigma.
$$

Since $\dfrac{\partial f}{\partial s}(0,x)=\nu(x)$, we have $h(0,x)=1$. Hence, $\dfrac{d}{dt}\Vol(V(t))\arrowvert_{t=0}=|\Sigma|_g$. From this, we obtain that $\Vol(V_k)=\epsilon_k|\Sigma|_{g_k}+O(\epsilon_k^2).$ Finally, it is easy to see that for $k$ sufficiently large we have $Q(V_k)\leqslant Q_{g_k}(\phi)<0.$ This concludes the proof of the claim. 
\end{proof}

Choose $k$ sufficiently large such that $Q(V_k)<0$. Thus, we have that there exists a positive function $u\in C^{\infty}(V_k)$ such that the metric $\overline{g}=u^4g_k$ satisfies
$$
R_{\overline{g}}=C<0,\ C\in\mathbb{R},\ \ \textrm{on}\ \ V_k \ \ \textrm{and}\ \ H_{\overline{g}}=0\ \ \textrm{on}\ \ \partial V_k.
$$
After scaling the metric $\overline{g}$ if necessary, we can suppose that $C=-2$.

In analytic terms, this means that $u$ solves
\begin{equation}
\left\{
\begin{aligned}
\Delta_{g_k}u-\dfrac{1}{8}R_{g_k}u-\dfrac{1}{4}u^5&=0\ \ \textrm{on}\ \ V_k\\
\dfrac{\partial u}{\partial \eta}+\dfrac{1}{4}H_{g_k}u&=0\ \ \textrm{on}\ \ \partial V_k.\label{Yamabe2}
\end{aligned}
\right.
\end{equation}

By (\ref{Yamabe2}) and the fact that $R_{g_k}\geqslant -2$, we have 
$$
\Delta_{g_k}u+\dfrac{1}{4}u-\dfrac{1}{4}u^5\geqslant 0\ \ \textrm{on} \ \ V_k.
$$

Consider $x_0\in V_k$ such that $u(x_0)=\displaystyle\max_{x\in V_k}u(x).$ If $x_0\in V_k\setminus\partial V_k$, then we get
$$
\dfrac{1}{4}u(x_0)\geqslant\dfrac{1}{4}u(x_0)^5.
$$
Thus, $u(x_0)\leqslant 1$. It follows, by the maximum principle, that either $u\equiv 1$ or $u<1$. The first possibility does not occur because the mean curvature of $\Sigma_{g_k}$ with respect to $g_k$ is positive and with respect to $\overline{g}$ is equal to zero. It follows that $u<1$.

Now, suppose $x_0\in\partial V_k$. If $u(x_0)\geqslant 1$, we obtain, by the Hopf's boundary point lemma, that either $u$ is constant or $\dfrac{\partial u}{\partial\eta}(x_0)>0$. The first possibility does not occur by the same argument used in the interior maximum case. But, since $H_{g_k}\geqslant 0$, (\ref{Yamabe2}) implies that $\dfrac{\partial u}{\partial \eta}(x_0)\leqslant 0$. Thus, the second possibility is also not possible. Hence, $u(x_0)<1.$

Therefore, we conclude that $u(x)<1$ for all $x\in V_k$. From this, we obtain that $|\Sigma|_{\overline{g}}<|\Sigma|_{g_k}=4\pi(g(\Sigma)-1)$.



Finally, denote by $\mathfrak{I}(\Sigma)$ the isotopy class of $\Sigma$ in $V_k$. Observe that $\Sigma$ is incompressible in $V_k$. Moreover, we have that $V_k$ is orientable and irreducible and does not contain any one-sided compact embedded surface. Since $H_{\overline{g}}=0$, we can directly apply the version for three-manifolds with boundary  of the Theorem 5.1 in \cite{HS}, (see also \cite{MSY}), to obtain a compact embedded surface $\overline{\Sigma}\in\mathfrak{I}(\Sigma)$ such that
$$
|\overline{\Sigma}|_{\overline{g}}=\displaystyle\inf_{\hat{\Sigma}\in\mathfrak{I}(\Sigma)}|\hat{\Sigma}|_{\overline{g}}.
$$

Therefore, $|\overline{\Sigma}|_{\overline{g}}\leqslant|\Sigma|_{\overline{g}}<4\pi(g(\Sigma)-1)$. But this is a contradiction with (\ref{inequality}), since we have proven, by using the lower bound $R_{\overline{g}}\geqslant-2$ and the second variation of  area, that we must have $|\overline{\Sigma}|_{\overline{g}}\geqslant 4\pi(g(\Sigma)-1)$. This concludes the proof of the proposition.
\end{proof}
Next, we will conclude the rigidity in the Theorem \ref{Thm1}. Observe that the Proposition \ref{prop3} implies $\dfrac{d}{dt}|\Sigma_t|_g\leqslant 0$ for all $t\in[0,\epsilon_1)$. Thus, $|\Sigma_t|_g\leqslant|\Sigma|_g$ for all $t\in[0,\epsilon_1)$ and this implies $|\Sigma_t|_g=|\Sigma|_g$ for all $t\in[0,\epsilon_1)$ because $\Sigma$ is locally area-minimizing. Therefore, by Proposition \ref{Prop1}, we have that $\Sigma_t$ is totally geodesic and $\Ric(\nu(t),\nu(t))=0$ on $\Sigma_t$ for all $t\in[0,\epsilon_1)$. In particular, we have that all the surfaces $\Sigma_t$ are minimal and the stability operator of $\Sigma_t$, denoted by $L_{\Sigma_t}$, is equal to $\Delta_{\Sigma_t}.$

Define $\rho(t)=\rho(t,x)=\big\langle\nu(t,x),\dfrac{\partial f}{\partial t}(t,x)\big\rangle$. We have
$$
L_{\Sigma_t}\rho(t)=-H^{\prime}(t),
$$
so $\Delta_{\Sigma_t}\rho(t)=0.$ Thus, $\rho(t)$ does not depend on $x$.

Since $\Sigma_t$ is totally geodesic, we have that $\nabla_{\frac{\partial f}{\partial x_i}}\nu(t)=0$ for all $i=1,2$, where $(x_1,x_2)$ are local coordinates on $\Sigma$. Moreover, by the fact that $\langle\nu(t),\nu(t)\rangle=1$ we have that $\nabla_{\frac{\partial f}{\partial t}}\nu(t)$ is tangent to $\Sigma_t$. Hence, it follows that
\begin{eqnarray*}
\big\langle\nabla_{\frac{\partial f}{\partial t}}\nu(t),\frac{\partial f}{\partial x_i}\big\rangle&=&\frac{\partial}{\partial t}\big\langle\nu(t),\frac{\partial f}{\partial x_i}\big\rangle-\big\langle\nu(t),\nabla_{\frac{\partial f}{\partial t}}(\partial f/\partial x_i)\big\rangle\\
&=&-\big\langle\nu(t),\nabla_{\frac{\partial f}{\partial x_i}}(\partial f/\partial t)\big\rangle\\
&=&-\frac{\partial}{\partial x_i}\rho(t)\\
&=&0,
\end{eqnarray*}
for all $i=1,2$. Hence, $\nabla_{\frac{\partial f}{\partial t}}\nu(t)=0$. This means that, for all $x\in\Sigma$, $\nu(t,x)$ is a parallel vector field along the curve $\alpha_x:[0,\epsilon_1)\longrightarrow M$ given by $\alpha_x(t)=f(t,x)=\textrm{exp}_x(w(t,x)\nu(x))$. 

Observe that $D(\exp_x)_{w(t,x)\nu(x)}(\nu(x))$ is also a parallel vector field along the curve $\alpha_x$. Thus, $\nu(t,x)=D(\exp_x)_{w(t,x)\nu(x)}(\nu(x))$ because $w(0,x)=1$ by Proposition \ref{Prop1}. From this, we conclude that $\rho(t)=\dfrac{\partial w}{\partial t}(t,x)$.

By Proposition \ref{Prop1}, we have 
$$
\int_\Sigma (w(t,x)-t)\,d\sigma=0,
$$
so
$$
\int_{\Sigma}\dfrac{\partial w}{\partial t}(t,x)\,d\sigma=|\Sigma|_g.
$$

Therefore, since $\dfrac{\partial w}{\partial t}(t,x)$ does not depend on $x$, we get $\dfrac{\partial w}{\partial t}(t,x)=1$. This implies that $w(t,x)=t$ for all $(t,x)\in[0,\epsilon_1)\times\Sigma$ because $w(0,x)=0$. Thus, we conclude that $f(t,x)=\exp_x(t\nu(x))$ and, since $\Sigma_t$ are totally geodesic, the pullback of $g$ by $f|_{[0,\epsilon_1)\times\Sigma}$ is the product metric $dt^2+g_{\Sigma}$, where $g_{\Sigma}$ is the induced metric on $\Sigma$.

Arguing similarly for $t\leqslant 0$, we obtain the following proposition which is the rigidity in Theorem \ref{Thm1}.

\begin{Prop}\label{Prop4}
If $\Sigma$ attains the equality in (\ref{inequality}), then $\Sigma$ has a neighborhood which is isometric to $((-\epsilon,\epsilon)\times\Sigma,dt^2+g_{\Sigma})$, where $\epsilon>0$ and $g_{\Sigma}$ is the induced metric on $\Sigma$ which has constant Gauss curvature equal to -1.
\end{Prop}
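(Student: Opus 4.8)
The plan is to combine everything established so far and propagate the product structure from $\Sigma_0=\Sigma$ along the family $\Sigma_t$ in both directions. First I would recall that, by Proposition \ref{prop3} (and its mirror image for $t\leqslant 0$), one has $H(t)\leqslant 0$ for $t\in[0,\epsilon_1)$ and $H(t)\geqslant 0$ for $t\in(-\epsilon_1,0]$. Plugging this sign information into the first-variation formula \eqref{equation 1}, together with the positivity of $\int_\Sigma\langle\nu(t),\partial f/\partial t\rangle\,d\sigma$ (arranged by shrinking $\epsilon$), gives $\frac{d}{dt}|\Sigma_t|_g\leqslant 0$ on $[0,\epsilon_1)$ and $\frac{d}{dt}|\Sigma_t|_g\geqslant 0$ on $(-\epsilon_1,0]$; hence $|\Sigma_t|_g\leqslant|\Sigma|_g$ for all $t$. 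But $\Sigma$ is locally area-minimizing, so $|\Sigma_t|_g\geqslant|\Sigma|_g$ as well, forcing $|\Sigma_t|_g=|\Sigma|_g=4\pi(g(\Sigma)-1)$ for every $t\in(-\epsilon_1,\epsilon_1)$.

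Next I would feed this equality back into Proposition \ref{Prop1}: since each $\Sigma_t$ also realizes the equality in \eqref{inequality} and is (by construction, and now automatically) locally area-minimizing, each $\Sigma_t$ is totally geodesic, with $\Ric(\nu(t),\nu(t))=0$ and $R_g=-2$ along $\Sigma_t$, and the stability operator of $\Sigma_t$ reduces to $L_{\Sigma_t}=\Delta_{\Sigma_t}$. Setting $\rho(t,x)=\langle\nu(t,x),\partial f/\partial t(t,x)\rangle$, the standard identity $L_{\Sigma_t}\rho(t)=-H'(t)$ combined with $H\equiv 0$ gives $\Delta_{\Sigma_t}\rho(t)=0$, so $\rho(t)$ is constant on $\Sigma_t$. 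Then I would show the normal vector field is parallel along each curve $\alpha_x(t)=\exp_x(w(t,x)\nu(x))$: because $\Sigma_t$ is totally geodesic, $\nabla_{\partial f/\partial x_i}\nu(t)=0$, and because $|\nu(t)|=1$ the field $\nabla_{\partial f/\partial t}\nu(t)$ is tangential, while pairing it with $\partial f/\partial x_i$ and using the symmetry of the connection together with $\partial_{x_i}\rho(t)=0$ shows it vanishes. Since $D(\exp_x)_{w(t,x)\nu(x)}(\nu(x))$ is also parallel along $\alpha_x$ and agrees with $\nu(t,x)$ at $t=0$, the two coincide; consequently $\rho(t)=\partial w/\partial t(t,x)$. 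The normalization $\int_\Sigma(w(t,x)-t)\,d\sigma=0$ from Proposition \ref{prop2} gives $\int_\Sigma \partial_t w(t,x)\,d\sigma=|\Sigma|_g$, and since $\partial_t w$ is $x$-independent this yields $\partial_t w\equiv 1$, hence $w(t,x)=t$. Therefore $f(t,x)=\exp_x(t\nu(x))$, and since each $\Sigma_t$ is totally geodesic and the curves $\alpha_x$ are unit-speed geodesics meeting them orthogonally, $f^*g=dt^2+g_\Sigma$ on $(-\epsilon_1,\epsilon_1)\times\Sigma$, where $g_\Sigma$ has $K_\Sigma\equiv-1$ by Proposition \ref{Prop1}.

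The main obstacle is the step asserting $|\Sigma_t|_g=|\Sigma|_g$ for all $t$: a priori the constant-mean-curvature surfaces $\Sigma_t$ produced by the implicit function theorem could have strictly smaller area on one side, and only the delicate conformal/Yamabe argument of Proposition \ref{prop3}, together with the variational characterization of the minimum in the isotopy class and the sign of $H(t)$, rules this out. Everything after that is essentially a rigidity-from-equality bootstrap plus the parallel-transport computation, which is routine once the area is known to be locally constant. I would therefore present the argument as: (i) derive $|\Sigma_t|_g\equiv|\Sigma|_g$ from Proposition \ref{prop3} and local minimality; (ii) apply Proposition \ref{Prop1} to each $\Sigma_t$; (iii) run the parallel-transport argument to identify $f$ and the metric; and conclude the statement of Proposition \ref{Prop4}.
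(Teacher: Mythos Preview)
Your proposal is correct and follows essentially the same route as the paper: derive $|\Sigma_t|_g\equiv|\Sigma|_g$ from Proposition~\ref{prop3} and local minimality, apply Proposition~\ref{Prop1} to each $\Sigma_t$, and then run the parallel-transport computation to identify $w(t,x)=t$ and conclude $f^*g=dt^2+g_\Sigma$. The only cosmetic difference is that you treat both signs of $t$ simultaneously while the paper first handles $t\geqslant 0$ and then remarks that the case $t\leqslant 0$ is analogous.
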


Now, we will prove the Corollary \ref{Cor1}. Suppose $\Sigma$ minimizes area in its homotopy class and $\Sigma$ attains the equality in (\ref{Thm1}). Define $f:\mathbb{R}\times\Sigma\longrightarrow M$ by $f(t,x)=\exp_x(t\nu(x))$, where $\nu$ is the unit normal vector field to $\Sigma$.

\begin{Prop}\label{Prop5} 
$f:(\mathbb{R}\times\Sigma,dt^2+g_{\Sigma})\longrightarrow (M,g)$ is a local isometry.
\end{Prop}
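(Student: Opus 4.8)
The plan is to run the standard continuation argument of \cite{CG} and \cite{BBN}, upgrading the local splitting of Proposition \ref{Prop4} to a global one. Since $(M,g)$ is complete, the map $f(t,x)=\exp_x(t\nu(x))$ is defined and smooth on all of $\mathbb{R}\times\Sigma$. Exactly as in the computation preceding Proposition \ref{Prop4}, the cross terms of $f^*g$ vanish identically: along the unit-speed normal geodesic $\gamma_x(t)=\exp_x(t\nu(x))$ one has $\frac{d}{dt}\langle\dot\gamma_x,\partial_{x_i}f\rangle=\tfrac12\,\partial_{x_i}|\dot\gamma_x|^2=0$ with initial value $\langle\nu(x),e_i\rangle=0$, while the $dt^2$ coefficient is $|\dot\gamma_x|^2\equiv 1$. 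Hence $f^*g=dt^2+g_t$, where $t\mapsto g_t$ is a smooth family of symmetric $2$-tensors on $\Sigma$ with $g_0=g_\Sigma$, and the assertion that $f$ is a local isometry amounts to showing $g_t=g_\Sigma$ for every $t$. By Proposition \ref{Prop4} (and the discussion preceding it) this holds for $|t|<\epsilon$.

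First I would set $T=\sup\{\tau\geqslant 0:g_t=g_\Sigma\text{ for all }t\in[0,\tau]\}\geqslant\epsilon>0$ and prove that $T=\infty$ (the case $t\leqslant 0$ being identical). This set is closed by continuity of $t\mapsto g_t$, so it suffices to show it is open, i.e. that $g_t=g_\Sigma$ on $[0,T]$ forces $g_t=g_\Sigma$ on $[0,T+\epsilon')$ for some $\epsilon'>0$. Since $\partial_tg_t=\pm2A_t$, where $A_t$ is the second fundamental form of $\Sigma_t=f(t,\cdot)$, vanishes on $[0,T)$ and $\partial_tg_t$ is continuous, the (a priori only immersed) surface $\Sigma_T$ is totally geodesic with induced metric $g_T=g_\Sigma$; hence $|\Sigma_T|_g=4\pi(g(\Sigma)-1)$ and $K_{\Sigma_T}\equiv-1$. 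Moreover the mean curvature $H(t)$ of $\Sigma_t$ is smooth in $t$ and vanishes on $[0,T)$, so $H'(T)=0$; as $\partial_t f(T,x)=\dot\gamma_x(T)=\nu_T(x)$, the first variation of mean curvature gives $0=H'(T)=\pm L_{\Sigma_T}(1)=\pm(\Ric(\nu_T,\nu_T)+|A_T|^2)$, whence $\Ric(\nu_T,\nu_T)\equiv 0$ along $\Sigma_T$, and then the Gauss equation $\Ric(\nu_T,\nu_T)=\tfrac12 R_g-K_{\Sigma_T}-\tfrac12|A_T|^2$ forces $R_g\equiv-2$ along $\Sigma_T$. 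Thus along $\Sigma_T$ all the conclusions of Proposition \ref{Prop1} hold; moreover $\Sigma_T$ is freely homotopic to $\Sigma$ (via $s\mapsto f(s,\cdot)$, $s\in[0,T]$), so, $\Sigma$ being area-minimizing in its homotopy class, every small normal deformation of $\Sigma_T$ has area at least $4\pi(g(\Sigma)-1)=|\Sigma_T|_g$.

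Next I would re-run the local analysis of Propositions \ref{prop2} and \ref{prop3} with $\Sigma_T$ in place of $\Sigma$. Since $L_{\Sigma_T}=\Delta_{\Sigma_T}$, Proposition \ref{prop2} yields a constant-mean-curvature family near $\Sigma_T$; the proof of Proposition \ref{prop3} (the estimate $Q(V_k)<0$, the Yamabe metric on $V_k\cong[0,\epsilon_k]\times\Sigma$, Hopf's lemma, and the minimization via \cite{HS}) together with the concluding computation then go through, since they use only intrinsic data of $(\Sigma,g_\Sigma)$ and the metrics pulled back to the collars $[0,\epsilon_k]\times\Sigma$ — which remain orientable, irreducible, and free of one-sided compact embedded surfaces — while the relevant competitors have area at least $4\pi(g(\Sigma)-1)$. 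The outcome is $\epsilon'>0$ together with a product neighbourhood of $\Sigma_T$ realized by $s\mapsto\exp_{\varphi(x)}(s\,\nu_T(x))$, where $\varphi=f(T,\cdot)$; but $\nu_T(x)=\dot\gamma_x(T)$, so $\exp_{\varphi(x)}(s\dot\gamma_x(T))=\gamma_x(T+s)=f(T+s,x)$, and therefore $g_t=g_\Sigma$ for $t\in(T-\epsilon',T+\epsilon')$, contradicting the definition of $T$ unless $T=\infty$. Arguing the same way for $t\leqslant 0$ gives $f^*g=dt^2+g_\Sigma$ on all of $\mathbb{R}\times\Sigma$, i.e. $f$ is a local isometry.

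The point that requires the most care is the re-run just described: $\Sigma_T$ need not be embedded — the quotient examples in the Introduction show that $f(T,\cdot)$ can be a nontrivial Riemannian covering onto a one-sided embedded surface — so one must verify that Propositions \ref{prop2} and \ref{prop3}, stated for embedded $\Sigma$, in fact depend only on $(\Sigma,g_\Sigma)$ and on the pulled-back metrics of the collars, not on an embedding into $M$; alternatively, one passes to the local cover of $M$ in which $\Sigma_T$ lifts to an embedding, argues there, and projects down. Apart from this bookkeeping the argument is routine.
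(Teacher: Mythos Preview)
Your proof is correct and follows essentially the same open--closed continuation argument as the paper: the set of $t$ for which $f|_{[0,t]\times\Sigma}$ is a local isometry is nonempty by Proposition~\ref{Prop4}, closed by continuity, and open by applying the local rigidity again to $\Sigma_t$ (homotopic to $\Sigma$, same area, hence area-minimizing and attaining equality). You are more explicit than the paper about the possibility that $\Sigma_T$ is only immersed---the paper finesses this in one phrase by considering $\Sigma_t$ ``with the metric induced by $f$'', i.e.\ working in the pulled-back collar, which is exactly your proposed fix---but the approach is the same.
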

\begin{proof}
Consider $A=\{t>0:f\arrowvert_{[0,t]\times\Sigma}\ \, \textrm{is a local isometry}\}$. By Proposition \ref{Prop4}, this set is nonempty. Moreover, $A$ is closed. Let us prove that $A$ is open. Given $t\in A$, consider the immersed surface $\Sigma_t=\{\textrm{exp}_x(t\nu(x)):x\in\Sigma\}$ with the metric induced by $f$. We have that $\Sigma_t$ is homotopic to $\Sigma$ and $|\Sigma_t|=|\Sigma|$. Hence, $\Sigma_t$ minimizes area in its homotopy class and attains the equality in (\ref{inequality}). Therefore, by Proposition \ref{Prop4}, we conclude that there exists $\epsilon>0$ such that $f\arrowvert_{[0,t+\epsilon]\times\Sigma}$ is a local isometry. It follows that $A$ is open and consequently $f|_{[0,\infty)\times\Sigma}$ is a local isometry. Arguing similarly for $t<0$, we conclude the proposition.
\end{proof}

To conclude the Corollary \ref{Cor1} (cf. pg 2), observe that the Proposition above implies that $f:(\mathbb{R}\times\Sigma,dt^2+g_{\Sigma})\longrightarrow (M,g)$ is a covering map.

\section{Proof of Theorem \ref{Cor2}}

Let $\partial\Omega^{(1)}$ be a connected component of $\partial\Omega$ which is isometric to $(\Sigma,g_{\Sigma})$. Consider $\alpha=\inf\{|\hat{\Sigma}|_g:\hat{\Sigma}\in\mathfrak{I}(\partial\Omega^{(1)})\}$, where $\mathfrak{I}(\partial\Omega^{(1)})$ is the isotopy class of $\mathfrak{I}(\partial\Omega^{(1)})$. By hypothesis, $\partial\Omega^{(1)}$ is incompressible in $\Omega$, $H_g\geqslant 0$ and  $\Omega$ is orientable and irreducible and does not contain one-sided compact embedded surfaces. Therefore, we can apply the version for three-manifolds with boundary of the Theorem 5.1 in \cite{HS} (see also \cite{MSY}), to obtain a compact embedded surface $\overline{\Sigma}\in\mathfrak{I}(\partial\Omega^{(1)})$ such that $|\overline{\Sigma}|=\alpha.$ Note that $\overline{\Sigma}\in\mathfrak{I}(\partial\Omega^{(1)})$ implies $\overline{\Sigma}$ has genus equal to $g(\Sigma).$

Since all connected components of $\partial\Omega$ have nonnegative mean curvature, it follows from the maximum principle that either $\overline{\Sigma}$ is a boundary component of $\Omega$ or $\overline{\Sigma}$ is in the interior of $\Omega$. If $\overline{\Sigma}$ is in the interior of $\Omega$, then we obtain, by Theorem \ref{Thm1}, that $|\overline{\Sigma}|\geqslant 4\pi(g(\Sigma)-1)$ since $R_g\geqslant -2$ and $\overline{\Sigma}$ has genus equal to $g(\Sigma)$. On the other hand, we have $|\partial\Omega^{(1)}|=4\pi(g(\Sigma)-1)$ because $\partial\Omega^{(1)}$ is isometric to $(\Sigma,g_{\Sigma})$. From this, we get $|\overline{\Sigma}|=4\pi(g(\Sigma)-1).$ Now, if $\overline{\Sigma}$ is a boundary component of $\Omega$, then we have that $\overline{\Sigma}$ is a minimal surface because  $\overline{\Sigma}$ is area-minimizing and, by hypothesis, has nonnegative mean curvature. This implies, using Theorem \ref{Thm1}, that $|\overline{\Sigma}|\geqslant 4\pi(g(\Sigma)-1)$. Again we conclude that $|\overline{\Sigma}|=4\pi(g(\Sigma)-1)$. It follows from the previous arguments that we can suppose $\overline{\Sigma}=\partial\Omega^{(1)}$.

By the proof of the rigidity in Theorem \ref{Thm1}, we have that there exists $\epsilon>0$ such that the normal exponential map $f:[0,\epsilon)\times\overline{\Sigma}\longrightarrow\Omega$ defined by $f(t,x)=\exp_x(t\nu(x))$, where $\nu$ is the inward normal vector, is an injective local isometry.

Define 
$
l=\sup\{t>0:f(t,x)=\exp_x(t\nu(x))\ \, \textrm{is defined on}\ \,[0,t)\times\overline{\Sigma}\linebreak
\textrm{and is an injective local isometry}\}.
$
Since $\Omega$ is complete, we have that the normal geodesics to $\overline{\Sigma}$ extend to $t=l$. Thus, $f$ is defined on $[0,l]\times\overline{\Sigma}$. By continuity and the definition of $l$, we obtain that $f:[0,l]\times\overline{\Sigma}\longrightarrow\Omega$ is a local isometry. In particular, by continuity, the immersion $f:\overline{\Sigma}_l\longrightarrow\Omega$ is totally geodesic, where $\overline{\Sigma}_l=\{l\}\times\overline{\Sigma}.$

Using again the maximum principle, we obtain that either $f(\overline{\Sigma}_l)$ is a boundary component of $\Omega$, different from $\overline{\Sigma}$ because of the injectivity of $f$ on $[0,l)\times\overline{\Sigma}$, or $f(\overline{\Sigma}_l)$ is in the interior of $\Omega$.

Suppose $f(\overline{\Sigma}_l)$ is a boundary component of $\Omega$. Since $f$ is a local isometry on $[0,l]\times\overline{\Sigma}$, we have $\dfrac{\partial f}{\partial t}(l,x)$ is a unit normal vector to $\overline{\Sigma}_l$. It follows from this that $f:\overline{\Sigma}_l\longrightarrow\Omega$ is injective because $\overline{\Sigma}_l$ is a boundary component of $\Omega$. Thus, $f:[0,l]\times\overline{\Sigma}\longrightarrow\Omega$ is an injective local isometry. Since $\Omega$ is connected, we obtain $f([0,l]\times\overline{\Sigma})=\Omega$. Therefore, we have that $\Omega$ is isometric to $[0,l]\times\overline{\Sigma}.$

Let us analyze the case where $f(\overline{\Sigma}_l)$ is in the interior of $\Omega$. First, we have that $f:\overline{\Sigma}_l\longrightarrow\Omega$ cannot be injective. In fact, suppose $f:\overline{\Sigma}_l\longrightarrow\Omega$ is injective. Thus, by the rigidity in the Theorem \ref{Thm1}, there exists $\epsilon>0$ such that $f:[0,l+\epsilon)\longrightarrow\Omega$ is an injective  local isometry which is a contradiction because of the maximality of $l$. Therefore, there exist $x,y\in\overline{\Sigma}, x\neq y$, such that $f(l,x)=f(l,y)$. We have $Df(l,x)(T\overline{\Sigma}_l)=Df(l,y)(T\overline{\Sigma}_l)$, since otherwise $f$ would not be injective on $[0,l)\times\overline{\Sigma}$. This implies $\dfrac{\partial f}{\partial t}(l,x)=-\dfrac{\partial f}{\partial t}(l,y)$. Thus, since $f:\overline{\Sigma}_l\longrightarrow\Omega$ is totally geodesic, there exist neighborhoods of $x$ and $y$ in $\overline{\Sigma}_l$, respectively, such that the images by $f$ of these neighborhoods coincide. We conclude that $\hat{\Sigma}_l=f(\overline{\Sigma}_l)$ is a one-sided embedded compact surface in $\Omega$. But, this is a contradiction because, by hypothesis, $\Omega$ does not contain any one-sided embedded compact surface. This concludes the proof of Theorem \ref{Cor2}.
\begin{flushleft}
\textsc{Ivaldo Nunes}

\textsc{Instituto Nacional de Matem\' atica Pura e Aplicada (IMPA)}

\textsc{Estrada Dona Castorina 110, 22460-320, Rio de Janeiro-RJ, Brazil}

\textit{Email adress:} ivaldo82@impa.br
\end{flushleft}

\small {}
\end{document}